\theoremstyle{plain}
        \newtheorem{thm}{Theorem}[section]
        \newtheorem{prop}[thm]{Proposition}
        \newtheorem{ex}[thm]{Example}
        \newtheorem{defn}[thm]{Definition}
        \newtheorem{rem}[thm]{Remark}
         \newtheorem{conv}[thm]{Convention}
\newcommand{\NN}{{\mathbb N}}
\newcommand{\QQ}{{\mathbb Q}}
\newcommand{\ZZ}{{\mathbb Z}}
\newcommand{\RR}{{\mathbb R}}
\newcommand{\CC}{{\mathbb C}}
\newcommand{\PP}{{\mathbb P}}
\newcommand{\T}{{\bf T}}
\renewcommand{\S}{{\bf S}}
\DeclareMathOperator{\SL}{SL}
\DeclareMathOperator{\GL}{GL}
\DeclareMathOperator{\Sp}{Sp}
\DeclareMathOperator{\SO}{SO}
\DeclareMathOperator{\Sym}{Sym}
\DeclareMathOperator{\MC}{MC}
\DeclareMathOperator{\rk}{rk}
\DeclareMathOperator{\id}{id}
\title{On the construction of Calabi-Yau operators}
\author{
Stefan \textsc{Reiter}}
\address{Stefan Reiter, Department of Mathematics,
University of Bayreuth,
95440 Bayreuth,
Germany;
}
\email{stefan.reiter@uni-bayreuth.de}
\begin{document}

\subjclass{32S40, 34M99}

\keywords{Calabi-Yau operators, (middle) convolution, monodromy}
%\subjclass{XXX}
\maketitle

\begin{abstract}
 Given a differential operator of geometric origin there exists a list of operations
 that preserve this property, e.g., tensor products, pull-backs, push-forwards and the middle convolution.
 We apply certain sequences of these operations
 to construct  known and new examples of Calabi-Yau operators.
\end{abstract}

\section{Introduction}

Given a one parameter family of Calabi-Yau threefolds the periods of the unique holomorphic diﬀerential 3-form
satisfy a geometric differential equation (Picard-Fuchs equation)  that
fulfills a series of  (arithmetic) properties \cite[Calabi-Yau operators]{vS18}, some of them  encoding invariants of
the underlying family.
The most famous example is the Dwork quintic $\sum_{i=1}^5 x_i^5=x^{-1/5} \prod_{i=1}^5 x_i\;(x \in \CC)$
and the corresponding Picard-Fuchs operator
is the hypergeometric differential operator
$\vartheta^4-5x (\vartheta+1/5)(\vartheta+2/5) (\vartheta+3/5)(\vartheta+4/5), \quad \vartheta=x \frac{d}{dx}$.
From the list of those operators one has extracted a bunch of common features that hopefully describe (comprise)
these.
Thus one calls an irreducible operator $L$ of order $4$ a {\it Calabi-Yau} operator, if, see
\cite[Calabi-Yau operators]{vS18},
\begin{enumerate}
 \item it is irreducible of order $4$
 \item it is of Fuchsian type
 \item it is self-dual
 \item it has $0$ as MUM-point (maximal unipotent monodromy)
 \item it satisfies integrality conditions:
 \begin{enumerate}
  \item the holomorphic solution $y_0 (x) \in  \ZZ[[x]]$
  \item the $q$-coordinate $q(x) \in \ZZ[[x]]$, where $q(x) = \exp(y_1(x)/y_0(x))$ and  $y_1(x) := log(x)y_0(x) + f_1(x), f_1(x) \in x \QQ[[x]]$, is the normalised solution  that contains a single logarithm.
  \item  the instanton numbers $n_d \in \ZZ$ (that can be identified with the genus zero Gopakumar-Vafa invariants) that appear in Lambert-series of
         the Yukawa coupling
         $Y(q)={\rm const} + \sum_{d=1}^\infty n_d d^3 \frac{q^d}{1-q^d}$.
\end{enumerate}
\end{enumerate}

Generally one weakens the integrality conditions to $N$-integrality conditions by allowing mild denominators, see
\cite[Calabi-Yau operators]{vS18}.
For the history and a detailed overview of further amazing aspects of Calabi-Yau operators we refer to the survey article \cite{vS18}.

A classification of Calabi-Yau operators  would give an upper bound for
one parameter families of Calabi-Yau threefolds and would be  a first step their classification.
Meanwhile there are around 550 Calabi-Yau operators collected in  \cite[database]{AESZ}
 {\it https://cydb.mathematik.uni-mainz.de/} which was initiated
 by Almkvist, van Enckevort, van Straten and  Zudilin \cite[preprint]{AESZ}.
 However  some of them were found  in a computer search and  it is still unknown whether they
 are even of  geometric origin, i.e. whether they are factors of Picard-Fuchs equations.

It turns out  that many  examples of Calabi-Yau operators arise from
Hurwitz products (additive convolution) and Hadamard products (multiplicative convolution)
of geometric operators of smaller order (Gauss hypergeometric operators or geometric Heun operators),
s. e.g., Table of Calabi-Yau operators \cite[preprint]{AESZ} and \cite{vS18}.
By the Riemann-Hilbert correspondence a $n$-th order fuchsian differential operator is uniquely determined by its monodromy representation
\[\pi_1(\PP^1(\CC)\setminus \{t_1,\ldots,t_r\},x_0)=\langle  f_1,\ldots,f_{r} \mid f_1\cdots f_r=1\rangle \to \GL_n(\CC),\quad f_i \to T_{t_i}.\]
Hence this enables a  group theoretic approach to construct Calabi-Yau operators, since
the powerful machinery of the additive and multiplicative convolution \cite{Katz96}
can be expressed in terms of the change of the local monodromies.
Moreover, if a monodromy tuple $\T=(T_{t_1},\ldots,T_{t_r}), T_{t_1}\cdots T_{t_r}=\id,$ of a Calabi-Yau operator is symplectically rigid, i.e.
\[ \sum_{i=1}^r C_{\Sp_4} (T_{t_i})=(r-2)\dim \Sp_4=10 (r-2)   \]
then Bogner and the author showed in \cite{BR13}  that this Calabi-Yau operator can be obtained by a sequence of convolutions and tensor products from order one operators.
A similar approach involving also push-forwards in a subsequent paper \cite{BR17} was successful for many cases where
the rigidity condition was slightly weakened
\[ 2+\sum_{i=1}^r C_{\Sp_4}(T_{t_i})=(r-2)\dim \Sp_4=10 (r-2). \]
In most cases one could reduce the question of existence of such a Calabi-Yau operator  to the question of existence of a geometric second order operator.
But even the geometric origin of  second order operators being not hypergeometric, e.g., in the case of  Heun operators,
is in general  an open question.
Nevertheless this approach also produced some new examples for the database.\\

We
impose our candidates for the Calabi-Yau operators only some of the above mentioned properties, i.e., that they are formally self adjoint irreducible fourth order fuchsian differential  operators with
at least a  MUM point (maximal unipotent monodromy).
But we  also require that $L \in \QQ[x][\vartheta], \vartheta:=\frac{dx}{x},$  the operator is of geometric origin
and
that for its (symplectic) monodromy group $G$ holds $(G:(G\cap \Sp_4(\ZZ)) <\infty$.
This seems to be sufficient to obtain after a suitable Moebius transformation also the other $(N-)$integrality conditions.
E.g.  we checked   the first 50 (normalized) instanton numbers for $N$-integrality.

In Section~\ref{MC} the article starts with a review of the main properties of the convolution that we frequently
use in this paper.

Refining the above mentioned  methods in \cite{BR13} and \cite{BR17} we obtain in Section~3 series of old and new examples, some of them in a uniform way,
i.e.  we get families of operators that become by specializing local exponents, the singular points resp.,
Calabi-Yau operators.

In Section~4 we exploit the classification of almost Belyi maps with five exceptional points \cite{vHoeijKu19}.
This enables us to construct in a similar way as in \cite{BR17}  many new examples.

In the next section we study the additive self convolution and obtain different constructions for
a series of known Calabi-Yau operators and additionally produce some new examples.
It is remarkable that starting with certain  hypergeometric operators up to order 6
there appear operators up to order $21$
in the construction process of  Calabi-Yau operators.

Finally we review the some constructions involving the multiplicative convolution.
In the so-called Beauville case we obtain a new example that
was missed in \cite{AZ06}.\\

All in all we contribute $33$ new operators to the database.
We hope these and the new constructions for some known Calabi-Yau operators may enable the experts at least in some cases a construction for a corresponding family
of Calabi-Yau varieties.
A geometric realization of all  Calabi-Yau
operators with symplectically rigid monodromy tuple \cite{BR13} and those constructed in \cite{BR17} having a non symplectically rigid monodromy tuple with four regular singular points
was given in \cite[Summary of results]{DM19}.

\begin{conv}
We will frequently use the following notation
\[ L=[P_0(X),\ldots,P_r(X)] \]
for the operator
\[ L=P_0(\vartheta)+xP_1(\vartheta)+\dots+x^rP_r(\vartheta) \in \CC[\vartheta], \;\vartheta=x\frac{d}{dx}. \]
When scaling an operator
we apply
a multiplication of the solutions of $L$ by algebraic function of the form $(x-t)^k$ that changes the local exponents $a_i$ in the Riemann scheme at $t$
to $a_i+k$ and  the local exponents $b_i$  at $\infty$ to  $b_i-k$.
Thus the local monodromies just changes by scalars.
We also   describe the effect of the Moebius transformations in terms of the Riemann scheme
instead of writing the new operator.

For operators already in the database \cite{AESZ} we mostly construct the Calabi-Yau operators  only up to a Moebius transform and indicate the intermediate steps in the construction by illustrating them via the Riemann scheme.
We use the new numbering in \cite[database]{AESZ}, for some operators we additionally refer to the old AESZ numbering.
If we find new operators we also write down the final Calabi-Yau operator.
Note that so far all known Calabi-Yau operators can be distinguished by the (normalized) instanton numbers $n_1$ and $n_3$ to which we often refer.
To compute the operators in this paper we employed the packages DEtools and Gfun \cite{SZ94}   in Maple.
\end{conv}

\section{Properties of the middle convolution}\label{MC}

We recall some basic properties of the additive and multiplicative middle convolution, see e.g. \cite{DR99}, \cite{DJ17}, \cite{Osh12}, \cite{BR13}, that we frequently employ.\\

Let $f$ and $g$ be  solutions of the irreducible fuchsian differential operators $L_1$ with monodromy tuple $\T$ and $L_2$  with monodromy tuple $\S$, resp..
Then both the additive convolution $\int f(x)g(y-x)dx$ and  the multiplicative convolution (Hadamard product)
$\int f(x)g(y/x)dx$ satisfy again a fuchsian differential equation.
There is a certain subfactor that we call the middle convolution of $L_1$ and $L_2$ that we denote by $L_1 \star L_2$.
This operator corresponds to the middle convolution $\T \star \S$ of the corresponding monodromy tuples (see \cite{DJ17}).
The order of the operator $L_1\star L_2$
is given by the dimension of the corresponding parabolic (group) cohomology
$H_{par}^1(\PP^1(\CC) \setminus \{i, y-j,\infty \mid i \in I\subset \CC, j\in J\subset \CC)\}, {\mathcal{L}})$, where ${\mathcal{L}}={\mathcal{L}}_1 \otimes {\mathcal{L}}_2(y-x)$ in the additive case and
${\mathcal{L}}={\mathcal{L}}_1 \otimes {\mathcal{L}}_2(y/x)$ in the multiplicative case. i.e.
\begin{eqnarray*} \mbox{ order }(L_1\star L_2)&=&\rk(\S) \sum_{\infty\neq i\in I } \rk(T_i-\id)  +\rk(\T) \sum_{\infty \neq j \in J } \rk(S_j-\id)\\
    &&+
\rk(T_{\infty} \otimes S_{\infty}-\id)-2 \rk(\S)\rk(\T)
\end{eqnarray*}
in the additive case and in the multiplicative case (Hadamard product)
\begin{eqnarray*} \mbox{ order }(L_1\star L_2) &=& \rk(\S) \sum_{i\in I \setminus \{ 0, \infty \} } \rk(T_i-\id)  + \rk(\T) \sum_{j\in J\setminus \{ 0, \infty \}} \rk(S_j-\id) \\
&&+
 \rk(T_{\infty} \otimes S_{0}-\id) +\rk(T_{0} \otimes S_{\infty}-\id)   -2\rk(\S)\rk(\T).
 \end{eqnarray*}
The new  singularities are
\[ \{ i+j \mid \infty\neq i\in I, \infty\neq j\in J\} \cup \{ \infty\} \]
in the former case and in the latter case
\[ \{ i j \mid i \in I \setminus \{ 0, \infty \}, j\in J\setminus \{ 0, \infty \}\} \cup \{ 0,\infty\} \]
plus possibly further apparent singularities, i.e. those with trivial monodromy.

Due to Poincar{\'e} duality the middle convolution of two operators having   orthogonal (symplectic) monodromy group
each gives rise to an operator with symplectic monodromy group while the
 middle convolution of an operator with  orthogonal monodromy group and an operator with  symplectic monodromy group
gives rise to an operator with orthogonal monodromy group.

 When working with  monodromy tuples we write $\MC_\alpha(\T)$  for the additive middle convolution
 $\T \star \S$, where $\S=(S_0,S_\infty)=(\alpha,\alpha^{-1})$ is the monodromy tuple of the Kummer sheaf $x^a, \alpha=\exp(2 \pi i a)$.
 We will mainly make use of $\MC_{-1}$, the additive middle convolution with the orthogonal monodromy tuple
 $\S=(S_0,S_\infty)=(-1,-1)$.

\section{Pullback Constructions}

It is well known that a fuchsian hypergeometric differential operator of order $n$
with quasi-unipotent local monodromy is coming from geometry (being  a $n-1$-fold Hadamard product of $n$
geometric order one operators).
Thus we  start our construction with  a fuchsian hypergeometric differential operator of order four
or five having an orthogonal monodromy group.
Applying suitable rational pullbacks followed by  the middle convolution operator $\MC_{-1}$
we obtain  Calabi-Yau operators.

\subsection{Pullbacks of order four hypergeometric operators}

Since all proofs in this section follow the same reasoning and we are ultimately  interested in the
Calabi-Yau operators we sketch just the  proof of Prop.~\ref{p25c} 3) in this section.

\begin{prop}\label{p25c} % p25c.m
Let $L$ be the  generalized hypergeometric orthogonal differential operator with Riemann scheme
\[  \left\{\begin{array}{c| cccc}
    0 &  0 & 0 & 0 &1/2 \\
    1&  0 & 1/2 &  1 & 2 \\
    \infty &  1/5 & 2/5 &  3/5 & 4/5 \\
 \end{array} \right\}.\]

\begin{enumerate}
 \item  % p25a.m
 Applying  the  pullback
$\frac{(5x-3)^2}{4 x^5}$ with branch pattern $(1^3,2), (3,2), (5)$ (at $0, 1$ and $\infty$)
and a Hadamard product with $(1-x)^{1/2}$
yields after a Moebius transformation the  Calabi-Yau operator with  Riemann scheme
 \[  \left\{\begin{array}{c| cccc}
    0 &  0 & 0 & 0 &0 \\
    1/12 & 0 & 1/2 & 1/2 & 1 \\\
       t_i, i=1,2,3&  0 & 1 &  1 & 2\\
     \infty &  1/2 & 3/2 & 7/2 & 9/2
   \end{array} \right\},\]
  \[3888 \prod_{i=1}^3 (x-t_i)  = 1800 x^3-1200x^2-40x-1, \]
and  instanton numbers $n_1=-4, n_3=3856/9$.
This  is operator \# 5.99 in \cite{AESZ}.

 \item  % p25b.m
Applying  the  pullback
$(5x-4)/x^5$ with branch pattern $(1,4), (1^3,2), (5)$
and a Hadamard product with $(1-x)^{1/2}$
yields after a Moebius transformation the Calabi-Yau operator with  Riemann scheme
\[  \left\{\begin{array}{c| cccc}
    0 &  0 & 0 & 0 &0 \\
    1/256 & 0 & 1/2 & 1/2 & 2 \\\
       t_i, i=1,2,3&  0 & 1 &  1 & 2\\
     \infty &  1/2 & 3/2 & 7/2 & 9/2
   \end{array} \right\}, \mbox{ where }\]
  \[ 8192000 \prod_{i=1}^3 (x-t_i)  =8192000x^3-115200x^2+565x-1, \]
and (new) instanton numbers $n_1=63, n_3=142715$.

Thus we obtain
\[ L=P_0(\vartheta) +\dots + x^5 P_5(\vartheta) \]
where
\[ \begin{array}{ccc}
  P_0(X)&=& - X^4 \\
  P_1(X-1/2)&=& 1077 X^4+399/2 X^2+45/16\\
P_2(X-1)&=& -2^{10} \cdot 3  (3 X-1) (3 X+1) (17 X^2-2)\\
P_3(X-3/2)&=& 2^{15} \cdot 3 (1060 X^4-1305 X^2+270) \\
P_4(X-2)&=&-2^{24} \cdot 5^2 (2 X-3) (2 X+3) (7 X^2-4) \\
P_5(X-5/2)&=& 2^{32} \cdot 5^3 (X-1) (X-2) (X+2) (X+1) \\
   \end{array}
\]

 \item  % p25c.m

Applying  the  pullback
$\frac{1}{4} (x^2-5x+5)^2 x$ with branch pattern $(1,2^2), (1,2^2), (5)$
and a Hadamard product with $(1-x)^{1/2}$
yields after a Moebius transformation the Calabi-Yau     operator with  Riemann scheme
\[  \left\{\begin{array}{c| cccc}
    0 &  0 & 0 & 0 &0 \\
    1/125&  0 & 1 &  1 & 2 \\
    1/200-\sqrt{5}/1000&  0 & 1/2 &  1/2 & 1 \\
    1/200+\sqrt{5}/1000   &  0 & 1/2 & 1/2 & 1 \\
     \infty &  1/2 & 3/2 & 7/2 & 9/2
   \end{array} \right\}\]
and (new) instanton numbers $n_1=75, n_3=116525$.

Thus we obtain
\[ L=P_0(\vartheta) +\dots + x^5 P_5(\vartheta) \]
where
\[ \begin{array}{ccc}
  P_0(X)&=& 4 X^4 \\
  P_1(X-1/2)&=& -15/4 (60X^2+1)(20X^2+3)\\
P_2(X-1)&=& 2^{5} \cdot 5^4  (95X^4-25X^2+2) \\
P_3(X-3/2)&=& -2^{4} \cdot 3 \cdot 5^7 (100X^4-125X^2+27)  \\
P_4(X-2)&=&2^{7} \cdot 5^{10}  (7 X^2-4) (2X-3)(2X+3) \\
P_5(X-5/2)&=& -2^{10} \cdot 5^{13}  (X-1) (X-2) (X+2) (X+1) \\
   \end{array}
\]

\end{enumerate}
\end{prop}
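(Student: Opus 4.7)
The plan is to verify each ingredient of the stated construction in turn. First I would check that the rational map $\phi(x)=\frac{1}{4}(x^2-5x+5)^2 x$ is of degree $5$ with the claimed branch pattern: explicitly, $\phi^{-1}(0)=\{0,(5\pm\sqrt{5})/2\}$ with multiplicities $(1,2,2)$, $\phi^{-1}(\infty)=\{\infty\}$ with ramification $5$, and by factoring $\phi(x)-1=\frac{1}{4}\bigl((x^2-5x+5)^2 x-4\bigr)$ exhibit it as (linear)$\cdot$(quadratic)$^2$ to confirm the pattern $(1,2^2)$ over $1$. Riemann--Hurwitz ($R=2+2+4=8=2\cdot 5-2$) then certifies that $\phi$ is ramified only over $\{0,1,\infty\}$, as required.

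Next I would compute the Riemann scheme of the pullback $\phi^*L$: at each preimage of a singular point of $L$, multiply the local exponents by the ramification index, tracking apparent singularities that arise from resulting integer exponent differences. In particular, the unramified preimage $x=0$ of $0$ inherits exponents $\{0,0,0,1/2\}$, preserving the near-MUM structure needed for MUM at $0$ in the final operator. The doubly ramified preimages give exponents $\{0,0,0,1\}$ over $0$ and $\{0,1,2,4\}$ over $1$; these have to be compared with the actual squared local monodromies (a size-$3$ unipotent Jordan block plus an extra eigenvalue $1$) to determine whether they are genuinely singular or apparent. Irreducibility is preserved under pullback along a finite map generically, which I would note explicitly.

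Then I would perform the Hadamard product with the rank-one operator whose solution is $(1-x)^{1/2}$, i.e.\ the Kummer-like tuple $\S=(S_0,S_1,S_\infty)=(1,-1,-1)$ on $\PP^1\setminus\{0,1,\infty\}$. Using the multiplicative order formula from Section~\ref{MC} one verifies that, after cancellations at the preimages of $1$ where both the first-factor exponents contain $1/2$ and the Kummer factor contributes $-1$, the middle convolution has order exactly $4$. Since both factors carry orthogonal monodromy, Poincar\'e duality (Section~\ref{MC}) forces the convolution to be symplectic, which gives self-duality. The new singular locus in the multiplicative case consists of $\{0,\infty\}$ together with the products of the nonzero finite singularities of $\phi^*L$ with the singularity $1$ of the Kummer factor; identifying these products yields the five points $0$, the one finite preimage of $1$ under $\phi$, and the two roots of $x^2-5x+5$ (rescaled), which after a Moebius normalization placing the MUM preimage at $0$ match $\{0,\,1/125,\,1/200\mp\sqrt{5}/1000,\,\infty\}$.

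Finally I would determine the explicit Moebius transformation realising this normalisation, read off the stated Riemann scheme, and compute the coefficients $P_0,\ldots,P_5$ with DEtools/gfun. The remaining Calabi--Yau checks are then: irreducibility (from the construction and the absence of invariant subtuples under convolution), Fuchsianness (preserved throughout), MUM at $0$ (inherited from the unramified preimage $x=0$ whose exponents $\{0,0,0,1/2\}$ combine with $S_\infty=-1$ at infinity under the Hadamard product to yield a full rank-$3$ unipotent Jordan block), and numerical $N$-integrality of the $q$-coordinate together with the instanton numbers $n_1=75,\,n_3=116525$ and the first $50$ normalised $n_d$. The main obstacle is the bookkeeping of apparent singularities at the ramified preimages of $0$ and $1$: one must show that these do not contribute spurious factors and that the ranks of $T_i-\id$ entering the order formula give exactly $4$; this is conceptually straightforward but requires care, since a miscount changes the order of the output operator.
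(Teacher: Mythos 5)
Your proposal follows essentially the same route as the paper's own (sketched) proof: pull back along the degree-$5$ map, track the Riemann scheme and the apparent singularities at the ramified preimages of $0$ and $1$, take the Hadamard product with $(1-x)^{1/2}$, confirm order $4$ via the convolution order formula and the symplectic structure via Poincar\'e duality, and finish with the Moebius rescaling $x\mapsto 2^2 5^3 x$ plus machine computation of the coefficients and instanton numbers. The only slips are cosmetic: the local monodromy becomes trivial at the \emph{ramified} preimages of $1$ (the square of the reflection), whose doubled exponents are $0,1,2,4$ rather than containing $1/2$, and "irreducibility is generically preserved under pullback" is an assertion, not an argument — but the paper's proof is no more detailed on that point.
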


\begin{proof}
 We demonstrate the construction only in the last case.
By \cite{BH}   one concludes that the monodromy group of $L$ is orthogonal.
The  pullback
$\frac{1}{4} (x^2-5x+5)^2 x$ with branch pattern $(1,2^2), (1,2^2), (5)$
changes the Riemann scheme of $L$  to
\[  \left\{\begin{array}{c| cccc}
    0 &  0 & 0 & 0 &1/2 \\
    4&  0 & 1/2 &  1 & 2 \\
    3/2\pm \sqrt{5}/2&  0 & 1 &  2 & 4 \\
     5/2\pm \sqrt{5}/2&  0 & 0 &  0 & 1 \\
    \infty &  1 & 2 &  3 & 4 \\
 \end{array} \right\}.\]
(The branch pattern at $1$ is $(1,2^2)$ corresponding to
$\frac{1}{4} (x^2-5x+5)^2 x-1=  \frac{1}{4} (x-4)(x^2-3x+1)^2$.
 Hence the new  local exponents at $4$,  $3/2\pm \sqrt{5}/2$ resp., are the local exponents of $L$ at $1$, twice the  local exponents of
$L$ at $1$, resp..)
The Hadamard product with $(1-x)^{1/2}$ yields a fourth order symplectic operator
with  Riemann scheme
\[  \left\{\begin{array}{c| cccc}
    0 &  0 & 0 & 0 &0 \\
    4&  0 & 1 &  1 & 2 \\
     5/2\pm \sqrt{5}/2&  0 & 1/2 &  1/2 & 1 \\
    \infty &  1/2 & 3/2 &  7/2 & 9/2 \\
 \end{array} \right\}.\]
using
 \cite[4.2 Convolution and Hadamard product]{BR13} or direct computation via MAPLE.
The order $4$ follows a priori from  the formula   for the Hadamard product in  Section~\ref{MC} since
$ \rk(\T)=4, \rk(\S)=1, $ \[ \sum_{i\in \{4, 3/2\pm \sqrt{5}/2, 3/2\pm \sqrt{5}/2\} }
\rk (T_i-\id)= 5,\; \rk(S_1-\id) =1,\; \rk(T_\infty S_0-\id)=0 \] and   $\rk(T_0 S_\infty-\id)=3$
and the claim on the type of the monodromy group  from the   Poincar{\'e} duality in Section~\ref{MC}
since both operators  are orthogonal.
The change of the local monodromy is given in \cite[Prop. 2.13]{BR13}.
The appearance of an apparent singularity could be gathered from the Fuchs relation for the local exponents.
Note that from the middle convolution applied to monodromy tuples one can't derive the existence of an apparent
singularity.
Finally the Moebius transformation $x \mapsto 2^2 5^3 x$ yields the Calabi-Yau operator.
\end{proof}

\begin{prop}\label{pbS34}% p26 and pbS34.m

Starting with
a hypergeometric operator $L_0=[X^3(X-1/2),-(X+a)(X+1-a)(X+1/3)(X+2/3)]$
with Riemann scheme
\[  \left\{\begin{array}{c| cccc}
    0 &  0 & 0 & 0 &1/2 \\
    1&  0 & 1/2 &  1 & 2 \\
    \infty &  a & 1/3 &  2/3 & 1-a \\
 \end{array} \right\},\; a \in \{ 1/2,1/3,1/4,1/6\}, \]
and the pullback  $ 2x(x-3/2)^2$ followed by
 a Hadamard product with $(1-x)^{1/2}$
yields after a Moebius transformation the symplectic operator $L$ with  Riemann scheme
\[  \left\{\begin{array}{c| cccc}
    0 &  0 & 0 & 0 &0 \\
    2&  0 & 1 &  1 & 2 \\
    3/2&  0 & 1/2 &  1/2 & 1 \\
     \infty &  1/2 & 3a & 3-3a & 5/2
   \end{array} \right\}.\]

Thus we obtain
\[ L=P_0(\vartheta) +\dots + x^3 P_3(\vartheta) \]
where
\[ \begin{array}{ccc}
  P_0(X)&=& -18 X^4 \\
  P_1(X-1/2)&=&33 X^4-(81a^2-81a+21/2)X^2+9/16(2a-1)^2\\
P_2(X-1)&=&-20 X^4+(108a^2-108a+28)X^2 -4(3a-1)(3a-2)\\
P_3(X-3/2)&=& (X+1)(X-1)(2X-3+6a)(2X+3-6a)

   \end{array}
\]

For the special values  $a\in \{ 1/2,1/3,1/4,1/6\}$ and a suitable Moebius transformation $x \mapsto v x$ we get Calabi-Yau-operators with (normalized) instanton numbers $n_1$ and $n_3$

\[ \begin{array}{c cccc cc}
      a &   v& n_1 & n_3 &\# \mbox{ in \cite[database]{AESZ}} & \mbox{ remarks }\\
         0   &  2 \cdot 3 & 5 & 1/3 & & \mbox{ reducible} \\
    1/2 & -2^5 \cdot 3  & 4 & 644/3 & 2.52  \\
    1/3 & 2^1 \cdot 3^4 & 9 & 748 & 3.4  \\
    1/4 & 2^7 \cdot 3 & 68 & 18628/3 & 3.3 \\
    1/6 & 2^5 \cdot 3^4 & 900 & -8364884 & 3.2  \\
%    1   &  2 \cdot 3 & 5 & 1/3 & & \mbox{ reducible}
   \end{array}
 \]
\end{prop}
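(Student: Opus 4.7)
The plan is to follow the same strategy as in the proof of Proposition~\ref{p25c}(3). First I analyze the degree-three rational pullback $f(x)=2x(x-3/2)^2$ by factoring $f$ and $f-1$. Observing that $f(x)-1 = 2x^3-6x^2+\tfrac{9}{2}x-1 = 2(x-2)(x-\tfrac{1}{2})^2$, together with the evident factorization of $f$ itself, confirms the branch pattern $(1,2),(1,2),(3)$ at $0,1,\infty$. By \cite{BH} the hypergeometric operator $L_0$ has orthogonal monodromy group. The pullback transforms the Riemann scheme in the standard way, multiplying the local exponents of $L_0$ by the ramification index at each preimage: one obtains exponents $0,0,0,1/2$ at $x=0$, exponents $0,0,0,1$ at $x=3/2$, exponents $0,1/2,1,2$ at $x=2$, exponents $0,1,2,4$ at $x=1/2$, and exponents $3a,1,2,3-3a$ at $\infty$.

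Next I form the Hadamard product with $(1-x)^{1/2}$. The order formula from Section~\ref{MC} in the multiplicative case yields order four: with $\rk(\T)=4$ and $\rk(\S)=1$, the sum $\sum \rk(T_i-\id)$ over the finite nonzero singularities is $5$, $\rk(S_1-\id)=1$, $\rk(T_\infty S_0-\id)=0$, and $\rk(T_0 S_\infty-\id)=3$, giving $5+1+3-2\cdot 4=4$. By Poincar\'e duality the Hadamard product of two orthogonal operators has symplectic monodromy, so the resulting operator is symplectic. The new local monodromies are governed by \cite[Prop.~2.13]{BR13}. A priori the Hadamard product produces singularities at $0,1/2,3/2,2,\infty$, but the Fuchs relation on the sum of local exponents forces $x=1/2$ to be apparent, accounting for the four-singularity Riemann scheme stated. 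A final M\"obius transformation normalizes the positions as claimed.

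The explicit coefficient polynomials $P_0,\dots,P_3$ are then obtained by carrying out the above sequence of operations symbolically in Maple with the DEtools and Gfun packages. For the distinguished values $a\in\{1/2,1/3,1/4,1/6\}$ the rescaling $x\mapsto vx$ is calibrated to clear denominators in the holomorphic solution $y_0(x)$ and in the $q$-coordinate; the (normalized) instanton numbers $n_1,n_3$ can then be read off from the Yukawa coupling expansion, yielding the table entries and the identification with the database entries of \cite{AESZ}. The case $a=0$ reduces the local exponents at $\infty$ to an integer pattern that makes the Riemann scheme decomposable, which explains the reducibility annotated in the table.

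The main obstacle is the detection of the apparent singularity at $x=1/2$: as observed in the proof of Proposition~\ref{p25c}, middle convolution at the level of monodromy tuples does not record apparent singularities, so this step must be verified either by explicit symbolic computation of the operator or by a direct Fuchs-relation check on the exponent sum. Everything else in the argument is bookkeeping of Riemann schemes and routine verification of integrality at the listed values of $v$.
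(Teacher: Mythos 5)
Your overall strategy is exactly the paper's: the paper gives no separate proof of this proposition and explicitly defers to the reasoning of Prop.~\ref{p25c}(3), and your factorization $f(x)-1=2(x-2)(x-1/2)^2$, the branch pattern $(1,2),(1,2),(3)$, and the pulled-back Riemann scheme are all correct. However, the verification of the order formula contains a concrete error: you have transplanted the numerical values from the degree-$5$ pullback of Prop.~\ref{p25c}(3) without recomputing them for this degree-$3$ pullback. Here the finite nonzero singularities of the pulled-back operator are $3/2$, $2$, $1/2$, contributing $\rk(T_0^2-\id)=2$ (since $T_0\sim J_3\oplus(-1)$, so $T_0^2\sim J_3\oplus J_1$), $\rk(T_1-\id)=1$ (a reflection), and $0$ (the reflection squares to the identity, so $x=1/2$ is already apparent for the pullback). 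Thus $\sum_i\rk(T_i-\id)=3$, not $5$. Likewise $\rk(T_\infty S_0-\id)=\rk(T_\infty^3-\id)=2$, not $0$: the eigenvalues $e^{\pm 6\pi i a}$ (and, for $a=1/3$ or $1/2$, unipotent or $-J_2$ blocks) survive cubing, which is exactly why the final Riemann scheme retains the exponents $3a$, $3-3a$ at $\infty$. Your claim $\rk(T_\infty S_0-\id)=0$ is inconsistent with your own final scheme. The order still comes out as $1\cdot 3+4\cdot 1+2+3-2\cdot 4=4$ only because the two errors cancel ($5+0=3+2$); as written the justification is wrong, and the displayed arithmetic ``$5+1+3-2\cdot 4=4$'' does not even evaluate to $4$.

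A smaller point: the apparent singularity at $x=1/2$ is most cleanly seen from the monodromy ($T_1^2=\id$ because the local monodromy of $L_0$ at $1$ is a semisimple reflection and the ramification index is $2$), rather than from the Fuchs relation, which by itself only constrains exponent sums and cannot certify triviality of the local monodromy; the paper's remark about the Fuchs relation in the model proof concerns detecting the scaled apparent singularity at $\infty$. The remainder of your argument (Poincar\'e duality for symplecticity, \cite[Prop.~2.13]{BR13} for the exponent shifts, Maple for the explicit $P_i$ and the instanton numbers) matches the paper's intended reasoning.
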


\begin{prop}%p28.m
 Let $L$ be the  generalized hypergeometric orthogonal differential operator with Riemann scheme
\[  \left\{\begin{array}{c| cccc}
    0 &  0 & 0 & 0 &1/2 \\
    1&  0 & 1/2 &  1 & 2 \\
    \infty &  1/8 & 3/8 &  5/8 & 7/8 \\
 \end{array} \right\}\]

Applying  the  pullback $(4x-3)/x^4$ with branch pattern $(1,3), (1^2,2), (4)$
gives the Riemann scheme
\[  \left\{\begin{array}{c| cccc}
    0 & 1/2 & 3/2 & 5/2 &7/2 \\
    1 & 0 & 1 & 3 & 4 \\
    3/4&  0 & 0 &  0 & 1/2 \\
    -1\pm \sqrt{2}& 0 &1/2& 1 & 2 \\
    \infty &  0 & 0 &  0 & 3/2 \\
 \end{array} \right\}\]
Multiplication with $x^{-1/2}(x-3/4)^{1/2}$ and a Hadamard product with $(1-x)^{1/2}$ yields
after a Moebius transformation the Riemann scheme
\[  \left\{\begin{array}{c| cccc}
    0 & 0 & 0 & 0 &0 \\
    1/48 & 0 & 1 & 3 & 4 \\
    7/1296\pm 1/324 \sqrt{-2}& 0 &1& 1 & 2 \\
    \infty &  1/2 & 1/2 &  1/2 & 1/2 \\
 \end{array} \right\}\]
with instanton numbers $n_1=20 $ and $ n_3=-119332/9 $.
This is  operator \# 4.72.
Having  a second maximal unipotent monodromy point (MUM-point) at infinity we also get the operator
\# 4.73  with instanton numbers $n_1=2656$ and $n_3=2493879008$.
\end{prop}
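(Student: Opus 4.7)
The plan mirrors that of Proposition~\ref{p25c}(3). First, I would invoke \cite{BH} to confirm that the starting generalized hypergeometric operator $L$ with exponents $1/8,3/8,5/8,7/8$ at infinity has orthogonal monodromy group.

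Next, I would verify the pullback step by explicit computation of preimages of $\varphi(x)=(4x-3)/x^4$. The preimages of $0$ are $x=3/4$ (simple) and $x=\infty$ (ramification $3$, since $\varphi\sim 4/x^3$ near infinity); factoring $x^4-4x+3=(x-1)^2(x^2+2x+3)$ shows that the preimages of $1$ are $x=1$ (ramification $2$) together with the simple points $x=-1\pm\sqrt{-2}$; the sole preimage of $\infty$ is $x=0$ with ramification $4$. This matches the stated branch pattern $(1,3),(1^2,2),(4)$. Applying the rule ``new exponent equals ramification times old exponent'' then produces the intermediate Riemann scheme displayed in the proposition, which I would cross-check against the Fuchs relation.

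I would then apply the scaling by $x^{-1/2}(x-3/4)^{1/2}$, which shifts exponents at $0$ by $-1/2$, at $3/4$ by $+1/2$, and leaves those at $\infty$ unchanged (the two contributions cancel). The Hadamard product with $(1-x)^{1/2}$, whose rank-one monodromy tuple is $\S=(S_1,S_\infty)=(-1,-1)$, is handled via the multiplicative middle-convolution framework of Section~\ref{MC}: with $\rk(\T)=4$, $\rk(\S)=1$ and the ranks $\rk(T_i-\id)$ read off from the shifted exponents, the order formula yields a fourth-order operator. The new local exponents at the product singularities $3/4,\,1,\,-1\pm\sqrt{-2}$ and at $0,\infty$ follow from \cite[Prop.~2.13]{BR13}, and Poincaré duality guarantees that the resulting monodromy is symplectic. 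A suitable Möbius transformation then normalizes the finite singularities to the scheme stated in the proposition.

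Matching the instanton numbers $n_1=20$, $n_3=-119332/9$ identifies this operator as \#4.72 in \cite{AESZ}. For \#4.73 I would exploit the second MUM point at $\infty$, whose exponents $1/2,1/2,1/2,1/2$ become all zero after an appropriate twist: an inversion-type Möbius transformation $x\mapsto c/x$ with a rescaling chosen to clear denominators exchanges the two MUM-points and produces the alternative Calabi-Yau normalization with instanton numbers $n_1=2656$, $n_3=2493879008$. The main obstacle will be tracking the Jordan structure at $x=1$, whose unipotent exponents $0,1,2,4$ are integer but not consecutive and might a priori hide an apparent singularity not predicted by the middle-convolution formalism; as in the previous proofs, a direct MAPLE check using DEtools and Gfun is the decisive verification that no such apparent point appears.
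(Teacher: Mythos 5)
Your proposal follows exactly the template the paper prescribes for every statement in this section (namely the sketched proof of Prop.~\ref{p25c}~3)): Beukers--Heckman for orthogonality of $L$, explicit verification of the branch data of $(4x-3)/x^4$, exponent bookkeeping for the algebraic scaling and for the Hadamard product with $(1-x)^{1/2}$ via the order formula of Section~\ref{MC} and Poincar\'e duality, and a decisive MAPLE verification of the Jordan structure and apparent singularities --- the paper gives no separate proof of this proposition, so this is essentially its argument. Your computation is correct and in fact repairs two slips in the displayed intermediate scheme: the simple preimages of $1$ are $-1\pm\sqrt{-2}$ (the roots of $x^2+2x+3$), not $-1\pm\sqrt{2}$, and the exponents at $x=1$ must be $0,1,2,4$ (as both your doubling rule and the Fuchs relation, which gives total $24$ rather than $25$, confirm), not $0,1,3,4$.
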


\subsection{Pullbacks of order five operators} %GO4pullbacks/GO5_NeuBsp.m %GO5_Beauville.m

In this section we use the following construction to obtain Calabi-Yau operators.

Let $L_1$ be a second order  hypergeometric operator, Heun operator, resp., with Riemann scheme
\[  \left\{\begin{array}{c| ccc}
    0 & 0 & 0 \\
    1& a & -a \\
    \infty & 1 & 1
   \end{array} \right\},
    \left\{\begin{array}{c| ccc}
    0 & 0 & 0 \\
    1& 0 & 0 \\
    t & 0 & 0 \\
    \infty & 1 & 1
   \end{array} \right\},  \mbox{ resp. }. \]
For     $a\in \{ 1/2,1/3,1/4,1/6\}$  the hypergeometric operator $L_1$ is related to  families of elliptic curves.
If $(t,s)\in \{ (-1,0),(1/2,-1),(2,-1/2)\}$ or
\[(t,s) \in \{ (-8,-1/4),(-1/8,2),(9,-1/3),(1/9,-3),(8/9,-3/4),(9/8,-2/3)\}\]
then the  Heun operator $L_1=[tX^2, -(X^2 (t+1)+X(1+t)-s t), (X+1)^2]$
is a so-called
Beauville operator
also arising from a family of elliptic curves, see \cite{Herf91}, \cite[Elliptic fibre products]{vS18}.
Let further $L_2$ be the hypergeometric operator with Riemann scheme
   \[ \left\{  \begin{array}{c| ccc}
    0 & 0 & 0 \\
    1& 0 & 0 \\
    \infty & 1/3 & 1/3
   \end{array} \right\}.
    \]
The Hadamard product of $L_1$ with $L_2$ gives a fourth order operator with symplectic monodromy group and Riemann scheme
\[ \left\{\begin{array}{c| cccc}
    0 & 0 & 0 & 0 & 0 \\
    1& 0 & a & -a & 1 \\
    \infty & 1/3 & 1/3 & 2/3 &2/3
   \end{array} \right\},
    \left\{\begin{array}{c| cccc}
    0 & 0 & 0 & 0 & 0 \\
    1& 0 &
    1 & 1 & 2 \\
     t& 0 & 1 & 1 & 2 \\
    \infty & 1/3 & 1/3 & 2/3 &2/3
   \end{array} \right\}, \mbox{ resp.}.
   \]

This construction is already used in
\cite[3.1]{Alm06}.

Applying the middle convolution $\MC_{-1}$, e.g., \cite{DR99}, \cite[Section 2]{BR13}, yields
a fifths order operator with orthogonal monodromy group and Riemann scheme
\[  \left\{\begin{array}{c| ccccc}
     0 & 0 & 1/2 & 1/2 & 1/2 & 0 \\
    1& 0 & a+1/2 & 1 & 3/2-a & 2 \\
    \infty & -1/6 & 1/6 & 1/2& 5/6 &7/6
   \end{array} \right\},  \]
\[  \left\{\begin{array}{c| ccccc}
     0 & 0 & 1/2 & 1/2 & 1/2 & 0 \\
    1& 0 & 1 & 3/2 & 2 & 3 \\
     t& 0 & 1 & 3/2 & 2 & 3 \\
    \infty & -1/6 & 1/6 & 1/2& 5/6 &7/6
   \end{array} \right\}, \mbox{ resp.}.\]

A pullback with $S_{34} := (2x+3) x^2$
with branch pattern $(2,1), (2,1), (3)$ at $0, 1$ and $\infty$
changes the Riemann scheme to
\[ \left\{ \begin{array}{c| ccccc}
    -3/2 & 0 & 1/2 & 1/2 & 1/2 & 1 \\
      0& 0 & 1 & 1 & 1 & 2 \\
     1/2&  0 & a+1/2 & 1 & 3/2-a & 2 \\
     -1 & 0 & 2a+1 & 2 & 3-2a & 4 \\
    \infty & -1/2 & 1/2 & 3/2& 5/2 &7/2
   \end{array} \right\},\]
\[ \left\{ \begin{array}{c| ccccc}
    -3/2 & 0 & 1/2 & 1/2 & 1/2 & 1 \\
      0& 0 & 1 & 1 & 1 & 2 \\
     1/2&  0 & 1 & 3/2 & 2 & 3 \\
     -1 & 0 & 2 & 3 & 4 & 6 \\
     t_1 & 0 & 1 & 3/2 & 2 & 3 \\
     t_2 & 0 & 1 & 3/2 & 2 & 3 \\
     t_3 & 0 & 1 & 3/2 & 2 & 3 \\
    \infty & -1/2 & 1/2 & 3/2& 5/2 &7/2
   \end{array} \right\}, \mbox{ resp.},\]
 where we have an apparent singularity at $\infty$ and $  S_{34}-t=2(x-t_1)(x-t_2)(x-t_3)$.

 Hence $MC_{-1}$ and a following Moebius transformation result in
 a fourth order operator $L$ with symplectic monodromy group, e.g., \cite[Thm.~5.14]{DR99}, \cite[Section 2]{BR13}, and Riemann scheme
\[  \left\{\begin{array}{c| cccc}
    0 & 0 & 0 & 0 & 0 \\
    1& 0 & 2a-1/2 &1 &3/2-2a \\
     3& 0& 1/2& 1/2& 1 \\
     4& 0& 1-a& 1 & 1 + a \\
   \infty &1/2& 5/2& 7/2& 11/2
   \end{array} \right\}, \]

\[  \left\{\begin{array}{c| cccc}
    0 & 0 & 0 & 0 & 0 \\
     3& 0& 1/2& 1/2& 1 \\
     4& 0& 1& 1 & 2 \\
     s_1& 0& 1& 1 & 2 \\
     s_2& 0& 1& 1 & 2 \\
      s_3&0& 1& 1 & 2 \\
     \infty &1/2& 5/2& 7/2& 11/2
   \end{array} \right\}, \mbox{ resp.}, \]
where $(x-s_1)(x-s_2)(x-s_3)= 4(x+3/2) (x/2-3/2)^2 -4t.$

In the former case we obtain
\[ L=P_0(\vartheta) +\dots + x^6 P_6(\vartheta) \]
where
\[ \begin{array}{ccc}
  P_0(X)&=&2304X^4 \\
  P_1(X-1/2)&=&-7296X^4+(-1776-9b)X^2+36+1/4b\\
P_2(X-1)&=&8848X^4+(6b-560)X^2-2b-800 \\
P_3(X-3/2)&=&
-5248X^4+(-b+4544)X^2+b+1404 \\
P_4(X-2)&=&
1632X^4-3576X^2-608 \\
P_5(X-5/2)&=&
-256X^4+1040X^2 \\
P_6(X-3) &=&(4X^2-25)(4X^2-1) \\
b&=&576(a-1/2)^2
   \end{array}
\]

and in the latter case

\[ L=P_0(\vartheta) +\dots + x^6 P_6(\vartheta) \]
where
\[ \begin{array}{ccc}
 P_0(X)&=&-2304 t X^4 \\
  P_1(X-1/2)&=&
192 (11 t+27) X^4
-96 (54 s t+7 t+15) X^2
+144 s t+36 t+36
\\
P_2(X-1)&=&
-16 (40 t+513) X^4
+16 (216 s t+56 t+141) X^2
-1152 s t-256 t+192 \\
P_3(X-3/2)&=&
64 (t+81) X^4
-16 (36 s t+13 t+298) X^2+
576 s t+144 t-1116
  \\
P_4(X-2)&=&
-1632 X^4
3576 X^2
608
  \\
P_5(X-5/2)&=&6(16X^2-65)X^2
  \\
P_6(X-3) &=&-(4X^2-25)(4X^2-1) \\
\end{array}\]

For the special values  $a\in \{ 1/2,1/3,1/4,1/6\}$ and a Moebius transformation $x \mapsto v x$ we get  Calabi-Yau-operators
with instanton numbers $n_1$ and $n_3$
in the former case
\[ \begin{array}{c ccccc}
      a &   v& n_1 & n_3 & \#  \\
    1/2 & 2^6 \cdot 3 &-28&-108956/3& \mbox{ new }\\
    1/3 & 2^2 \cdot 3^4& -63 & -195443 &\mbox{ new }\\
    1/4 & 2^8 \cdot 3 & -196 & -9021764/3 &\mbox{ new } \\
    1/6 & 2^6 \cdot 3^4 & -1764 & -1141651916& \mbox{ new } \\
    1   & 2^2 \cdot 3 & -7 & -80/3 & \mbox{ new }
   \end{array}
 \]
 (if $a=1$ then the operator is reducible but produces also integer instanton numbers)
 and
 in the latter case
\[ \begin{array}{c ccccc}
      t & s&   v& n_1 & n_3 & \# \\
     -1 &0 & 2^4 \cdot 3 &-16& -15248/3&\mbox{ new } \\
    1/2&-1 & 2^4 \cdot 3& 8  & 9928/3 & 6.14  \\
    2 &-1/2& 2^5 \cdot 3 & -8 & -3784/3 &6.39 \\
\\

    -8 & -1/4& 2^5 \cdot 3 &  -11 & -3422/3 &6.6 \\
    -1/8&2   & 2^2 \cdot 3 &  -25 & -17452 &6.5\\
      9 & -1/3 &2^2 \cdot 3^3 &  -9 & 217/3 &6.7\\
       1/9 & -3  & 2^2 \cdot 3 &  23 & 16723 & 6.10\\
      8/9& -3/4 & 2^5 \cdot 3 &  -5 & -10994/3& \mbox{ new }\\
    9/8 &-2/3  & 2^2 \cdot 3^3 & -9 & -15092/3 & \mbox{ new }
   \end{array}
 \]

 \section{(almost)-Belyi maps with five exceptional points}

Here we make use of the Table of (almost)-Belyi maps with five exceptional points
by
Mark van Hoeij and Vijay Kunwar \cite{vHoeijKu19}.
We apply an  almost-Belyi map with five exceptional points to one of the following hypergeometric operators with
Riemann scheme
\[  \left\{\begin{array}{c| ccc}
    0 & 0 & 0 \\
    1& -1/4 & 1/4 \\
    \infty & a & 1-a
   \end{array} \right\}\]
where $a \in \{ 1/3,3/8,5/12\}$.
Thus the monodromy group is commensurable to $\SL_2(\ZZ)$.
We choose  those  almost-Belyi maps such that we get after a Moebius transformation an operator $L$ with Riemann scheme of the following type
\[  \left\{\begin{array}{c| ccc}
    0 & 0 & 0 \\
   \pm  1&  -c_1 & c_1 \\
    \pm t  & -c_2 & c_2 \\
     t_1 & 0 & 2
   \end{array} \right\},\]
where $t_1\neq \infty$.
Since for a solution $f$ of $L$ the product $f(x)f(-x)$ is invariant under $x \mapsto -x$,
which corresponds to the tensor product  of $L(x)$ with $L(-x)$, see \cite[Def. 4.3]{BR13},
we can apply the map (push forward) $(x^{1/2})^*$  as in [BR17, Sec. 3] and
receive a fourth operator with orthogonal monodromy group and
Riemann scheme
\[  \left\{\begin{array}{c| cccc}
    0 &  0 & 0 & 0 &1/2 \\
    1&  -2c_1 & 0 &  2c_1 & 1 \\
    t^2&  -2c_2 & 0 &  2c_2 & 1 \\
     t_3 &  0 & 1 & 2 & 4 \\
     t_4 &  0 & 1 & 2 & 4 \\
     \infty &  1 & 3/2 & 3 & 4
   \end{array} \right\}.\]
A Hadamard product with $(1-x)^{1/2}$ yields a fourth operator with symplectic monodromy group and
Riemann scheme
\[  \left\{\begin{array}{c| cccc}
    0 &  0 & 0 & 0 & 0 \\
    1&  -2c_1 +1/2 & 0 &  2c_1+1/2 & 1 \\
    t^2&  -2c_2+1/2 & 0 &  2c_2+1/2 & 1 \\
     t_5 &  0 & 1 & 3 & 4 \\
     \infty &  1/2 & 3/2 & 3/2 & 5/2
   \end{array} \right\}.\]

In the following cases we get Calabi-Yau operators after a Moebius transformation changing the Riemann scheme to

\[  \left\{\begin{array}{c| cccc}
    0 &  0 & 0 & 0 & 0 \\
    \alpha&  -2c_1 +1/2 & 0 &  2c_1+1/2 & 1 \\
    \beta &  -2c_2+1/2 & 0 &  2c_2+1/2 & 1 \\
     \gamma &  0 & 1 & 1 & 2 \\
     \infty &  1/2 & 3/2 & 7/2 & 9/2
   \end{array} \right\},\]
where $\alpha, \beta, \gamma \in \QQ$ or $\gamma \in \QQ, (x-\alpha)(x-\beta) \in \QQ[x]$ and $ c_1=c_2$.
Note that at $\infty$ we have after scaling an apparent singularity.
(We only list operators that do not appear in \cite{BR13}, \cite{BR17}. There are sometimes several almost-Belyi maps
 that give rise
to the same Calabi-Yau operator (we only list one of those)).
In addition starting with the same almost-Belyi  map in  \cite{vHoeijKu19} can result in different Calabi-Yau operators
(depending on the Moebius transformation following it to obtain an operator $L$).
\begin{enumerate}
 \item known cases
 \[ \begin{array}{c cccccccccc}
     \mbox{ \cite{vHoeijKu19} }  & a&c_1 &c_2& n_1 & n_3 & \# & AESZ\\
     N_{55} &1/3 & 0 & 0&1312& 127846048 &5.114=5.118 & 412=416 \\
      N_{60} &1/3& 0 & 0&352& 3284448&5.90& 330\\
    \\
      N_{20}&1/3&  1/4 & 0& 22432 & 425234532128 &5.116 & 414 \\
    \\
      N_{11} & 1/3& 1/3 & 0 &  1083168 &32204207145918624&5.117 & 415 \\

          N_{31}& 1/3 & 1/3 & 0 & 3843&2715123387 &5.115 & 413 \\
          \\
      O_{14} &1/8 & 1/8 & 0 &  80 & 174096& 5.13 & 83
    \end{array}
\]
 \item new cases
 \[ \begin{array}{c cccccccccc}
  \mbox{\cite{vHoeijKu19}} &a& c_1 &c_2& n_1 & n_3 & \\
     N_{56}  & 1/3&0 & 0& 44 &33404   \\
      N_{57} &1/3& 0 & 0& 128736 & 26197767783776\\
       N_{58} &1/3& 0 & 0&885& 47779414 \\
       N_{60a} &1/3& 0 & 0&633760 &  2121862738725664\\
        N_{60b} &1/3& 0 & 0 &433&23061052/3\\
         N_{60c} &1/3& 0 & 0& 28 & 38332/3\\
    \\
      N_{18a} &1/4& 1/4 & 0&103554 & -2002161892680 \\
       N_{18b} &1/4 & 1/4 & 0&1328 & 170147184\\
        P_{5} & 1/4& 1/4 & 0&  184068 & 71387757415212\\
    \\
      N_{32} &  1/3& 1/3 & 0 & 156 &  765460  \\
       N_{33} & 1/3 & 1/3 & 0 & 402564000 &  199695555707527015523104  \\
         N_{34}& 1/3 & 1/3 & 0 &  14176 &  72835177440 \\
       N_{35} & 1/3 & 1/3 & 0 &  176 &  1651440 \\

          \\
      O_{15a} &1/8& 1/8 &0  & 87968 & 6050453456672\\
       O_{15b} &1/8 & 1/8 & 0 & 1564 & 1995940660/9\\
       O_{15c} &1/8  & 1/8 & 0& 58 &930656/9 \\
    \end{array}
\]
\end{enumerate}

The operators can be written as
\[ N:=4 X^4 + x N_{1}(X+1/2)+x^2 N_{2}(X+1) +x^3 N_{3}(X+3/2)+ x^4  N_{4}(X+4/2)+ x^5 N_{5}(X+5/2),\]

where
\begin{eqnarray*}
 N_{1}(X)&=&a_1 (b_1 X^4+c_1 X^2+d_1) \\
 N_{2}(X)&=&a_2 (b_2 X^4+c_2 X^2+d_2) \\
 N_{3}(X)&=&a_3 (b_3 X^4+c_3 X^2+d_3)\\
 N_{4}(X)&=&a_4 (2X-3)(2X+3)(b_4 X^2+c_4) \\
 N_{5}(X)&=&a_5 (X-1)(X+1)(X+2)(X-2)
\end{eqnarray*}

\[\begin{array}{cccc}
N & [a_1,b_1,c_1,d_1] &[a_2,b_2,c_2,d_2]  & \\
 N_{56} & [-1,3088, 600, 9] &  [2^8,3452, -787, 44]&\\
 N_{57} & [-2^{10}, 3619, 439, 27]&[2^{21}, 653648, -207560, 34797] & \\
  N_{58} & [-2^{-2} 3^{11},47664, 7336, 243] &[2^8 3^4, 6084, -1666, 205] &
\\
 N_{60a} & [2^6,270592, 32608, 2209]  &  [2^{ 24},1786624, -570016, 99225] &   \\
  N_{60b} &[-2^{-2},76432, 12472, 361]  & [2^5 3^1, 371524, -97235, 10731] &\\
 N_{60c} &[-2^2, 544, 100, 1]& [2^6  3^1, 2224, -560, 33]&  \\
  N_{18a} &[-3^2, 242352, 16560, 2401]&[2^4 3^{10} ,487404, -202773, 43040]& \\
 N_{18b} &[-2^2, 14096, 2488, 81]&[2^{12}, 75196, -18770, 2119]&   \\
  N_{P5} & [-2^4 3^2, 36612, 4707, 289],&  [2^{11} 3^{10}, 22932, -7073, 1185]& \\
 N_{32} &[-2^2 3^1, 736, 140, 3]& [2^6 3^2, 12816, -2992, 235] &\\
  N_{33} &[-2^6 3^1 5^1,10301184, 1188896, 98415]& [2^{26} 3^6, 199860352, -65431288, 12285285]&\\
 N_{34} &[-2^6, 7264, 1024, 49]& [2^{15} 3^1, 218744, -63866, 9575] & \\
  N_{35} &[-2^2, 2704, 504, 9] & [2^{12},2716, -674, 55] & \\
 O_{15a} &[-2^6, 37376, 4128, 289]& [2^{21}, 271904, -89708, 15633]&  \\
 O_{15b} & [-2^2,15136, 2276, 81]]&[2^6 3^1, 1891504, -528176, 68257]& \\
 O_{15c} &[-1, 4528, 848, 9]&[2^4 3^1, 37508, -10135, 752] &   \\
\end{array}\]
\[\begin{array}{ccccc}
N_x &  [a_3,b_3,c_3,d_3] &[a_4,b_4,c_4] & a_5 \\
 N_{56} &[-2^{11} 7^2, 1132, -1399, 288]& [2^{15} 7^4, 20, -11]& -2^{20} 7^6 \\
 N_{57} &[-2^{37} 3 5^2, 24551, -32318, 8712]&[2^{47} 3^2 5^4 7, 1052, -659] & -2^{62} 3^5 5^6 7^2& \\
  N_{58} &[-2^9 3^8 5, 13068, -16663, 4000] &  [2^{14} 3^{ 13} 5^2, 72, -43] & -2^{18} 3^{18} 5^4& \\
 N_{60a} & [ 2^{ 41} 3^2 5^2, 52400, -69103, 18818]&[2^{56} 3^5 5^4, 256,-11]  &2^{74} 3^8 5^6 \\
  N_{60b} & [-2^7 3^2 5^2 7^2, 22956, -29019, 6728]&[2^9 3^3 5^4 7^4,172,-101]  &- 2^{14} 3^4 5^6 7^6 \\
 N_{60c} &  [-2^{ 16} 3^2, 63, -78, 16]& [2^{18} 3^3, 52,-29 ]  & -2^{28} 3^4 \\
  N_{18a} &  [-2^6 3^{ 18} 7^1, 269244, -382091, 123823]&[2^{12} 3^{26}  7^2, 1110,- 743] &  -2^{20}  3^{34} 7^4\\
 N_{18b} &[-2^{17} 3^1 5^2, 82372, -103621, 23814]& [2^{22} 3^2 5^4 19^1,568, -331 ]& -2^{30} 3^4 5^6 19^2 \\
  N_{P5} &  [-2^{18} 3^{17} 5^1 ,4299, -5629, 1500]&[2^{22} 3^{25} 5^2, 268, -167&   -2^{32}  3^{32} 5^4\\
 N_{32} & [-2^{ 16} 3^3, 1621, -2014, 432]& [2^{18} 3^4 13^1, 460,-259], &-2^{28} 3^6 13^2 \\
  N_{33} &[-2^{45} 3^{12} 13^1, 19881456, -26399045, 7386314]&[2^{62} 3^{18} 5^1 11^1 13^2,17976,-11423]& -2^{82} 3^{24} 5^2 11^2 13^4 \\
 N_{34} & [-2^{27} 3^2 5^2, 16380, -21193, 5408]&[2^{32} 3^3 5^4 17^1,1148,-703],&-2^{46} 3^4 5^6 17^2 \\
  N_{35} &[-2^{17} 5^1, 8284, -10339, 2250] & [2^{22} 5^2 13^1, 232,-133]& -2^{30} 5^4 13^2 \\
 O_{15a} & [-2^{38} 3^2, 27388, -36317, 10000]& [2^{50} 3^5 7^1, 524,-331] & -2^{68} 3^8 7^2 \\
 O_{15b} &  [-2^{16} 3^4 7^2, 4147, -5310, 1296]&[2^{18} 3^3 7^4 19^1,1228,-739]&   -2^{28} 3^4 7^6 19^2\\
 O_{15c} &  [-2^6 3^3 5^1, 35324, -44139, 9375]&[2^{11} 3^3 5^2 7^1,598,-337]&  -2^{18} 3^4 5^4 7^2 \\
\end{array}\]

However not all Calabi-Yau operators with a Riemann scheme as above can be constructed this way,
e.g., see Prop~\ref{p25c} or the cases  \# 5.109 and \# 5.37 in \cite{AESZ}.

\begin{rem}%p26.m
The Calabi-Yau operator \# 3.2 has the  Riemann scheme
\[  \left\{\begin{array}{c| cccc}
    0 &  0 & 0 & 0 &0 \\
    2&  0 & 1 &  1 & 2 \\
    3/2&  0 & 1/2 &  1/2 & 1 \\
     \infty &  1/2 & 1/2 & 5/2 & 5/2
   \end{array} \right\}\]
and  instanton numbers $n_1=900, n_3=-8364884$, see Prop.~\ref{pbS34}.
This operator can also be obtained by the above construction
from the hypergeometric operator with Riemann scheme
\[  \left\{\begin{array}{c| ccc}
    0 & 0 & 0 \\
    1& -1/4 & 1/4 \\
    \infty & 5/12 & 7/12
   \end{array} \right\}\]
with the pullback $(x-1)^3$ which gives after a Moebius-transformation (and scaling)
the Riemann scheme
\[  \left\{\begin{array}{c| ccc}
    0 & 0 & 0 \\
    I& 1/4 & 3/4 \\
     -I& 1/4 & 7/4 \\
      \pm \sqrt{3}& 0 & 0 \\
   \end{array} \right\}.\]
Hence this operator is not invariant under $x \mapsto -x$ and we can proceed as above.
\end{rem}

\section{Additive Self Convolution}

The differential Galois group of the Laplace-Fourier transform $FT$ of the additive middle convolution $L_1 \star \dots \star L_r$
of the operators
$L_1, \dots, L_r$ corresponds to the tensor product of the differential Galois groups of the  Fourier-Laplace transforms $FT(L_i), i=1,\ldots,r$.
Thus one obtains information about the decomposition of the convolution product $L_1 \star \dots \star L_r$,
especially if $L_1=\ldots=L_r$.

\begin{defn}
 By the $n$-th symmetric,  n-th exterior, resp., power of the self convolution of $L$ we denote  the operator
 $P$ with $FT(P)=\Sym^n(FT(L))$,  $FT(P)=\Lambda^n(FT(L))$, resp..
\end{defn}

\begin{rem}\label{FT}
 If $L$ is an irreducible Fuchsian operator with monodromy tuple $\T$ then
 the order of $FT(L)$ is $m:=\sum_{i \in \CC} \rk(T_i-\id)$.
 It has only two non apparent singularities, a regular at $0$ and an irregular at $\infty$.
 The Jordan form $J(S_0)$ of the monodromy of $FT(L)$ at $0$ can be obtained from the local monodromy $T_\infty$   as follows.
 If $J(T_\infty)=\sum_{\alpha\in \CC}  \sum_{k\in \NN} \alpha J_k^{n_k}$ then
 \[ J(S_0)= \sum_{1\neq \alpha\in \CC}  \sum_{k\in \NN} \alpha J_k^{n_k} \oplus   \sum_{k\in \NN}  J_k^{n_k+1} \oplus J_1^v,\]
 where $J_k^{n_k}$ denotes an unipotent Jordan block of seize $k$ with multiplicity $n_k$ and $v$ is such that $\rk(J(S_0))=m$.
 Hence one can easily determine the local monodromy at $ \infty$ in case of the additive middle convolution.
 Roughly speaking the local monodromy at the finite singularities of $L_1\star L_2$ can be computed as follows.
 A Jordan block $\alpha J_k$ of the local monodromy at $t$ of $L_1$ and a Jordan block $\beta J_l$ of the local monodromy at $s$ of $L_2$
contribute (generically)
\[  \begin{array}{ccc}
 \alpha \beta  J_{k} \otimes J_{l}  & \alpha \neq 1, \beta \neq 1 , \alpha \beta \neq 1 \\
    \oplus_{i\in I} J_{i+1} & \alpha \neq 1, \beta \neq 1 , \alpha \beta = 1,& J_k \otimes J_l =\oplus_{i \in I} J_i \\
   \beta J_{k-1} \otimes J_l & \alpha = 1, \beta \neq 1 , \alpha \beta \neq 1 \\
    \alpha J_{k} \otimes J_{l-1} & \alpha \neq 1, \beta = 1 , \alpha \beta \neq 1 \\
    \oplus_{i\in I} J_{i-1} & \alpha = 1, \beta = 1,& J_k \otimes J_l =\oplus_{i \in I} J_i
    \end{array}
\]
 to the  local monodromy at $t+s$ of $L_1\star L_2$.
 For an exact statement see e.g., \cite{DR20}.
\end{rem}

\subsection{Order one operator}% Faltung_Calabi-Yau.maple// Calabi-Yau_Selbstfaltung.maple

Let $L$ be the operator
with Riemann scheme

\[  \left\{\begin{array}{c| cccc}

            \pm 1 & -1/2\\
            \infty & 1
           \end{array}\right\}. \]

\begin{enumerate}
 \item
The fivefold self convolution of $L$ gives an orthogonal operator of order $5$ (that is the fifth symmetric power) with
Riemann scheme
\[  \left\{\begin{array}{c| ccccc}
            \pm 5 & 0 & 1& 3/2& 2& 3 \\
            \pm 3 & 0 & 1& 3/2& 2& 3 \\
            \pm 1 & 0 & 1& 3/2& 2& 3 \\
            \infty & 1& 1& 1& 1& 1
           \end{array}\right\}.\]

 Since $\Lambda^2 \Sp_4 \cong \SO_5$  we obtain after the pushforward $x \mapsto x^{1/2}$
 (up to scaling and Moebius-transformation) the operator with
Riemann scheme
\[  \left\{\begin{array}{c| ccccc}
 0 & 0& 0& 0& 0&\\
             1/5^2 & 0 & 1/2& 3/2& 2& \\
             1/3^2 &  0 & 1/2& 3/2& 2& \\
             1 &  0 & 1/2& 3/2& 2& \\
            \infty & 3/4& 5/4& 7/4& 9/4&
           \end{array}\right\} \mbox{ (operator \# 6.2) }.\]

On the other hand if we apply the pushforward $x \mapsto x^{-1/2}$ followed by
$MC_{-1}$ we get
 (up to scaling and Moebius transformation) the operator with
Riemann scheme
\[  \left\{\begin{array}{c| ccccc}
            1/5^2 & 0 & 1& 1& 2&  \\
            1/3^2 & 0 & 1& 1& 2&  \\
             1 & 0 & 1& 1& 2&  \\
            0 & 0& 0& 0& 0&\\
             \infty & 1& 1& 2& 2&
           \end{array}\right\}\mbox{ (operator \# 3.1 (AESZ 34) up to Moebius transformation)}.\]

\item
 The sixfold self convolution of $L$ gives a symplectic operator of order $6$ (that is the sixth symmetric power) with
Riemann scheme
\[  \left\{\begin{array}{c| ccccccc}
            t_i & 0 & 1& 2&2 & 3& 4 \\
            \infty & 1& 1& 1& 1& 1 & 1
           \end{array}\right\},\quad t_i \in \{\pm 6, \pm 4, \pm 2,0\}.\]
Applying the pushforward $x \mapsto x^{1/2}$ followed by
$MC_{-1}$ and $\Lambda^2 \Sp_4 \cong \SO_5$ we get operator 6.1.

\end{enumerate}

\subsection{Order two operators}% s. Calabi_yau_Faltung.m

\begin{enumerate}
\item
Let $L$ be a second order hypergeometric operator with Riemann scheme
\[  \left\{\begin{array}{c| ccc}
    0&-1/2 & -1/2 \\
    1& 0 & 0 \\
    \infty & -a+3/2 & a+1/2
   \end{array} \right\}
\]

The self convolution of $L$
splits into the symmetric and anti symmetric square.
The former has
Riemann scheme
\[  \left\{\begin{array}{c| cccc}
    0 &  0 & 0 & 0 & 0 \\
    1&  0 & 1/2 &  1/2 & 1 \\
    2 &  0 & 1 &  1 & 2 \\
     2/3 &  0 & 1 & 3 & 4 \\
     \infty &  2a & 2-2a & -a+3/2 & a+1/2
   \end{array} \right\}\]

The operator has the form

\[ L=P_0(\vartheta) +\dots + x^5 P_5(\vartheta) \]
where
\[ \begin{array}{ccc}
  P_0(X) & = &   32 X^4 \\
  P_1(X-1/2) & =&
-176 X^4
-24 (4 b+9) X^2
-8 b-7  \\
   P_2(X-1) & = & 376 X^4+
16 (29 b+92) X^2+
64 b^2+80 b+240\\
   P_3(X-3/2) & = &
-388 X^4
-2 (1375+412 b) X^2
-2817/4-570 b-256 b^2 \\
   P_4(X-2) & = &
192 X^4
12 (53 b+161) X^2
336 b^2+828 b+540
\\
    P_5(X-5/2) & = &
-36 X^4
-9 (20 b+49) X^2
-9 (b+2) (16 b+5) \\
b & =& a-a^2
   \end{array}
\]

For special values of $a\in \{ 1/2,1/3,1/4,1/6\}$ we obtain a hypergeometric operator $L$ with monodromy group commensurable to $\SL_2(\ZZ)$ and a Moebius transformation $x \mapsto x t$ yields
Calabi-Yau-operators
with instanton numbers $n_1$ and $n_3$
(the case $a=1$ is reducible but produces also integer instanton numbers)
\[ \begin{array}{c ccccc}
      a & t&   n_1 & n_3 & \# \\
     1/2 &2^6 &  -16& -3280 & 5.9\\
     1/2 & 1/x \cdot 1/2^4    & 4/3 & 44/3& 5.6 \\
    1/3& 2^2 \cdot 3^3& -30  & -14632 &  5.12 \\
    1/4 & 2^8  & -80 & - 174096& 5.13\\
    1/6 & 2^6 \cdot 3^3 & -624 & -43406256 & 5.19 \\
   1 & 2^2 & -2& n_i=0\; (i=2k+1) &
   \end{array}
 \]

\item  Let $L$ be a second order hypergeometric operator with Riemann scheme % s. Selbstfaltung.m
\[  \left\{\begin{array}{c| ccc}
    0& -a+1/2 & a+1/2 \\
    1& 0 & 0 \\
    \infty & 0 & 0
   \end{array} \right\}
\]

The self convolution of $L$
splits into the symmetric and anti symmetric square.
The former has
Riemann scheme
\[  \left\{\begin{array}{c| cccc}
    0 &  -2a & 0 & 0 & 2a \\
    1&  0 & -a+1/2 &  a+1/2 & 1 \\
    2 &  0 & 1 &  1 & 2 \\
     2/3 &  0 & 1 & 3 & 4 \\
     \infty &  1 & 1 & 1 & 1
   \end{array} \right\}\]

The operator has the form
\[ L=P_0(\vartheta) +\dots + x^5 P_5(\vartheta) \]
where
\[ \begin{array}{ccc}
  P_0(X) & = &
32 X^4
-128 X^2 a^2   \\
  P_1(X-1/2) & =&-176 X^4+
16 (38 a^2-15) X^2+
(4 a+1) (4 a-1) (8 a^2+9)
   \\
   P_2(X-1) & = &
376 X^4
-4 (268 a^2-397) X^2
-24 (2 a+1) (2 a-1) (4 a^2+11) \\
   P_3(X-3/2) & = &-388 X^4+
4 (210 a^2-739) X^2
-3451/4+1326 a^2+288 a^4
  \\
   P_4(X-2) & = &192 X^4
-3 (84 a^2-697) X^2
-576 a^2+768

\\
    P_5(X-5/2) & = &-9/4(2X-3)^4
   \end{array}
\]

We get the following Calabi-Yau operators (after a Moebius transformation)

\[ \begin{array}{ccccc}
    a & \# &  \\
    0 & 5.6, 5.9 & \mbox{second MUM point} \\
   %  0 & 5.9 \\
     1/3 & 5.8 \\
      1/4 & 5.17 \\
      1/6 & 7.9 \\
     \end{array}
 \]

\item The self convolution of a operator   % Beauville_SelbstFaltung.m
      with Riemann scheme
      \[\left\{\begin{array}{c| ccccc}
    0 & 0 & 0 \\
    1& 0 & 0 \\
    t & 0 & 0 \\
    \infty & 1 & 1
   \end{array} \right\} \]
      gives
     an operator with symplectic monodromy group with Riemann scheme
      \[\left\{\begin{array}{c| ccccc}
    s & 0 & 1 & 1 & 2 \\
    2/3(t+1)& 0 & 1 & 3 & 4 \\
    \infty & 1 & 1 & 1 & 1 \\
   \end{array} \right\},\quad  s\in \{ 0,1,t,2,1+t,2t\}, \#\{ 0,1,t,2,1+t,2t\}=6. \]
 This gives the Calabi-Yau operators 6.24 and 6.31.

\end{enumerate}

\subsection{Order three operators}%Selbstfaltung.m

\begin{enumerate}
\item The second exterior power of the self convolution of an orthogonal operator
      with Riemann scheme
      \[\left\{\begin{array}{c| ccccc}
                 t_1 & 0 & 1/2 & 1  \\
                  t_2 & 0 & 1/2 & 1  \\
                  -(t_1+t_2)/2& -a & 0 & a  \\
    \infty & 1 & 1 & 1  \\
               \end{array}\right\}.
       \]
       gives a symplectic operator with Riemann scheme
       \[ \left\{\begin{array}{c| ccccc}
                   0 & 0 & 1 & 3 & 4\\
                 \pm (t_1+t_2) & 0 & 1 & 1 & 2\\
                  \pm (t_1-t_2)/2 & 0 & -a+1/2 & a+1/2 &1  \\
           \infty & 1 & 1 & 1 &1 \\
               \end{array}\right\}.
       \]
       Under the push forward $x\mapsto x^{1/2}$ we obtain the
        Riemann scheme
       \[ \left\{\begin{array}{c| ccccc}
                 0 & 0 & 1/2 & 3/2 & 2\\
                 (t_1+t_2)^2 & 0 & 1 & 1 &2  \\
                 (t_1-t_2)^2/4 & 0 & -a+1/2 & a+1/2 &1  \\
    \infty & 1/2 & 1/2 & 1/2 &1/2 \\
               \end{array}\right\}.
       \]
      Those operators appear as $Q_3$ in \cite[4.3]{BR17} with a different construction.
    %Q_3_Lian_Wiczer.m
      \item The second exterior power of the self convolution of an orthogonal operator (being a second symmetric square)
      with Riemann scheme
      \[\left\{\begin{array}{c| ccccc}
                 s & 0 & 1/2 & 1  \\
    \infty & 1 & 1 & 1  \\
               \end{array}\right\}, s\in \{ t_1,t_2,t_3,-(t_1+t_2+t_3),\mid t_i+t_j \neq 0 \mbox{ for } i \neq j\}.
       \]
       gives a symplectic operator with Riemann scheme
       \[ \left\{\begin{array}{c| ccccc}
                 0 & 0 & 1 & 3 & 4\\
                 t & 0 & 1 & 1 &2  \\
    \infty & 1 & 1 & 1 &1 \\
               \end{array}\right\}, t\in \{ \pm (t_1+t_2),\pm (t_2+t_3),\pm (t_1+t_3)\}.
       \]

       Under the push forward $x\mapsto x^{1/2}$ we obtain the
        Riemann scheme
       \[ \left\{\begin{array}{c| ccccc}
                 0 & 0 & 1/2 & 3/2 & 2\\
                 u & 0 & 1 & 1 &2  \\
    \infty & 1/2 & 1/2 & 1/2 &1/2 \\
               \end{array}\right\}, u\in \{ (t_1+t_2)^2,(t_2+t_3)^2,(t_1+t_3)^2\}.
       \]

 This results in the following new operators:%Bsp14ALambda.maple

 We start with  second order operators from the Lian and Wiczer list \cite{LW06},
who derived Picard–Fuchs equations for 175 genus-zero subgroups of $PSL(2, \RR)$
%(maier, on rationally parametrized  modular equations) [37]

 \[ L_{11A} = dx^2+f/g, \quad  f:=1+4 x+46 x^2 +1040 x^3 +6841 x^4 +15628 x^5 +10540 x^6 \]
\[         g:=4 x^2 (-1-2 x+43 x^2 +244 x^3 +416 x^4 )^2
   \]
results in
\[ L_{11A2}= [X^4, -188 X^4-376X^3-322X^2-134X-22, 484(24X^2+48X+19)(X+1)^2,\]
\[-58564(2X+5)(2X+1)(X+2)(X+1)],
\]
with instanton numbers $n_1=6, n_3=200$,
\[L_{14A}=dx^2+f/g,\quad f:=1+4 x+62 x^2 +1036 x^3 +7009 x^4 +18888 x^5 +17688 x^6,\]
\[     g:=4 x^2 (-1-2 x+59 x^2 +256 x^3 +276 x^4 )^2\]
    % (3.24) Duco
results in
\[L_{14A2}= [X^4, -121 X^4-242 X^3-207 X^2-86 X-14, 196 (22 X^2+44 X+17) (X+1)^2,\]
\[-9604 (2 X+5) (2 X+1) (X+2) (X+1)]
\]
with instanton numbers $n_1=3, n_3=71$ and
\[ L_{15A}=dx^2+f/g,\quad  f:=1+4 x+46 x^2 +1040 x^3 +6841 x^4 +15628 x^5 +10540 x^6 \]
\[         g:=4 x^2 (-1-2 x+43 x^2 +244 x^3 +416 x^4 )^2 \]
% (3.24)

results in
\[ L_{15A2}=[X^4, -89 X^4-178 X^3-151 X^2-62 X-10, 100 (32 X^2+64 X+25) (X+1)^2,\]
\[-10000 (2 X+5) (2 X+1) (X+2) (X+1)]
\]
with instanton numbers $n_1=3, n_3=-155/3$.

     A further Hadamard product with $(x-1)^{1/2}$ and the isomorphism $\Lambda^2 \Sp_4 \cong \SO_5$
     gives the Riemann scheme
      \[ \left\{\begin{array}{c| ccccc}
                 0 & 0 & 1 & 1 & 2\\
                 s & 0 & 1/2 & 3/2 &2  \\
    \infty & 1/2 & 1/2 & 1/2 &1/2 \\
               \end{array}\right\}, s\in \{ (t_1+t_2)^2,(t_2+t_3)^2,(t_1+t_3)^2\}.
       \]

This results in the following new operators:
\[ L_{11A3} =[X^4,
2^2(564 X^4+376 X^3+349 X^2+161 X+28), \]
\[ 2^5(70440 X^4+93920 X^3+100519 X^2+53618 X+10768), \]
\[ 2^4(81801152 X^4+163602304 X^3+198502896 X^2+117851728 X+25761639), \]
\[ 2^8 11^2(15708288 X^4+41888768 X^3+56857920 X^2+36723776 X+8504233), \]
\[ 2^{13} 11^4(1001184 X^4+3337280 X^3+5013336 X^2+3470080 X+838693), \]
\[ 2^{14} 11^6(677568 X^4+2710272 X^3+4465680 X^2+3277504 X+819119), \]
\[ 2^{20} 11^8(2 X+1) (4584 X^3+19100 X^2+28814 X+15213), \]
\[ 2^{23} 11^{10}(2 X+1) (2 X+3) (144 X^2+480 X+421), \]
\[ 2^{26} 11^{12}(2 X+1) (2 X+3)^2 (2 X+5)]
\]
with instanton numbers $n_1=60$ and $n_3=38692$, %  [60, 852, 38692, 2437352, 187942712]

\[
L_{14A3}= [X^4,
2(726 X^4+484 X^3+449 X^2+207 X+36),\]
\[ 2^2(227436 X^4+303248 X^3+319295 X^2+166162 X+32632), \]
\[ 2^4 (20069284 X^4+40138568 X^3+47521985 X^2+27301781 X+5802824),\]
\[  2^8  7^2 ( 5572776 X^4+14860736 X^3+19641869 X^2+12282198 X+2770718),\]
\[ 2^{10} 7^4 (3927792 X^4+13092640 X^3+19211440 X^2+12956736 X+3070939),\]
\[ 2^{14} 7^6 (435328 X^4+1741312 X^3+2823224 X^2+2039872 X+504461),\]
\[  2^{ 19}  7^8 (2 X+1) (7260 X^3+30250 X^2+45228 X+23747),\]
\[  2^{ 22} 7^ {10} (2 X+3) (2 X+1) (264 X^2+880 X+771), \]
\[   2^{ 26}  7^{ 12}(2 X+1) (2 X+3)^2 (2 X+5)]
\]

with instanton numbers $n_1=38$ and $n_3=7972$ and

\[L_{15A3}= [X^4,
2(534 X^4+356 X^3+329 X^2+151 X+26),\]
\[ 2^2(133452 X^4+177936 X^3+195803 X^2+108714 X+22424),\]
\[2^4(10135076 X^4+20270152 X^3+25353849 X^2+15564773 X+3479770),\]
\[ 2^6 5^2 (20499456 X^4+54665216 X^3+75946376 X^2+49973136 X+11714505),\]
\[2^{12} 5^4 (1781040 X^4+5936800 X^3+9043104 X^2+6311232 X+1533013),\]
\[2^{16} 5^6 (423680 X^4+1694720 X^3+2808784 X^2+2067232 X+517343),\]
\[2^{23} 5^8(2 X+1) (4140 X^3+17250 X^2+26064 X+13765),\]
\[2^{27} 5^{10}(2 X+3) (2 X+1) (192 X^2+640 X+561),\]
\[2^{32} 5^{12}(2 X+1) (2 X+3)^2 (2 X+5)]
\]

with instanton numbers $n_1=30$ and $n_3=8140/3$.

     \item The second exterior power of the self convolution of an orthogonal operator
      with Riemann scheme (e.g., obtained by the middle convolution of a Beauville operator with $\MC_{-1}$
      followed by a pullback $x \mapsto x^2$)
      \[\left\{\begin{array}{c| ccccc}
                 s & 0 & 1/2 & 1  \\
    \infty & 1 & 1 & 1  \\
               \end{array}\right\}, s\in \{ \pm t_1,\pm t_2 \}.
       \]
       gives a symplectic operator with Riemann scheme
       \[ \left\{\begin{array}{c| ccccc}
                 0 & 0 & 1 & 1 & 2\\
                 t & 0 & 1 & 1 &2  \\
    \infty & 1 & 1 & 1 &1 \\
               \end{array}\right\}, t\in \{ \pm (t_1 \pm t_2) \}.
       \]

       Under the push forward $x\mapsto x^{1/2}$ we obtain the
        Riemann scheme
       \[ \left\{\begin{array}{c| ccccc}
                 0 & 0 & 1/2 & 1/2 & 1\\
                 u & 0 & 1 & 1 &2  \\
    \infty & 1/2 & 1/2 & 1/2 &1/2 \\
               \end{array}\right\}, u\in \{ (t_1\pm t_2)^2 \}.
       \]

 This results in the  operators
$ Q_2$ where $\lambda=0$ in \cite[4.3]{BR17}.

     A further Hadamard product with $(x-1)^{1/2}$ and the isomorphism $\Lambda^2 \Sp_4 \cong \SO_5$
     gives the Riemann scheme
      \[ \left\{\begin{array}{c| ccccc}
                 0 & 0 & 1 & 1 & 2\\
                 s & 0 & 1/2 & 3/2 &2  \\
    \infty & 1/2 & 1/2 & 1/2 &1/2 \\
               \end{array}\right\}, s\in \{ (t_1+t_2)^2,(t_2+t_3)^2,(t_1+t_3)^2\}.
       \]

This results in the  operators $ Q_3$, where $\lambda=0$, in \cite[4.3]{BR17}.

 \item The second symmetric power of the self convolution of an orthogonal operator
      with Riemann scheme  (e.g., the middle convolution of a Beauville operator with $x^{1/2}$)
      \[\left\{\begin{array}{c| ccccc}
                 s & 0 & 1/2 & 1  \\
    \infty & 1/2 & 1/2 & 1/2  \\
               \end{array}\right\}, s\in \{ t_1,t_2,t_3\}.
       \]
       gives a symplectic operator with Riemann scheme
       \[ \left\{\begin{array}{c| ccccc}
                 t & 0 & 1 & 3 & 4\\
                 u & 0 & 1 & 1 &2  \\
    \infty & 1 & 1 & 1 &1 \\
               \end{array}\right\}, u\in \{ t_i+t_j,\; i,j=1,2,3 \}.
       \]
   But this is the same operator as in the case  of  the self convolution of a Beauville operator (second symmetric square).
\end{enumerate}

\subsection{Order four operators}

\begin{enumerate}
\item The self convolution of a symplectic operator
      with Riemann scheme
       \[\left\{\begin{array}{c| ccccc}
    s & 0 & 1 & 1 & 2 \\
    \infty & 1/2 & 1/2 & 1/2 & 1/2 \\
   \end{array} \right\},\quad  s\in \{ 0,1,t_1,t_2\} \]
  splits in a symmetric and anti-symmetric square.

  The latter is
    a symplectic operator
      with Riemann scheme
       \[\left\{\begin{array}{c| ccccc}
    s & 0 & 1 & 1 & 2 \\
    \infty & 1/2 & 1/2 & 1/2 & 1/2 \\
   \end{array} \right\},\quad  t\in \{ 0,1,t_1,1+t_1, 1+t_2,t_1+t_2\}. \]

 This gives the Calabi-Yau operators as in the case for the order three operators.

\item The self convolution of a symplectic operator
      with Riemann scheme
       \[\left\{\begin{array}{c| ccccc}
     0 &   0 & a & 1-a & 1 \\
    s & 0 & 1 & 1 & 2 \\
    \infty & 1/2 & 1/2 & 1/2 & 1/2 \\
   \end{array} \right\},\quad  s\in \{ t_1,t_2\} \]
  splits in a symmetric and anti-symmetric square.

  The latter is
    a symplectic operator
      with Riemann scheme
       \[\left\{\begin{array}{c| ccccc}
    0 & 0 & 1 & 1 & 2 \\
    s & 0 & a & 1-a & 1 \\
     t_1+t_2 & 0 & 1 & 1 & 2 \\
       (t_1+t_2)/2 & 0 & 1 & 3 & 4 \\
    \infty & 1 & 1 & 1 & 1 \\
   \end{array} \right\},\quad  s\in \{ t_1, t_2\}. \]

After a Moebius transformation and a push forward $x \mapsto x^{1/2}$ we get   the Riemann scheme

   \[\left\{\begin{array}{c| ccccc}
    0 & 0 & 1/2 & 3/2 & 2 \\
    (t_1-t_2)^2/4 & 0 & a & 1-a & 1 \\
     (t_1+t_2)^2/4 & 0 & 1 & 1 & 2 \\
    \infty & 1 & 1 & 1 & 1 \\
   \end{array} \right\}. \]

 This results in the operators
$ Q_3$  in \cite[4.3]{BR17}.

\end{enumerate}

\subsection{Pullbacks of hypergeometric operators  and the Katz-Arinkin-algorithm}
%s. Calabi-Katz-Langlands und pb_x*.m

Here we use the following construction.
  Let  $L$ be a $n$-th  order hypergeometric operator with Riemann scheme
\[  \left\{\begin{array}{c| cccccccc}
    0& -a_1 & \cdots & -a_{n/2} & a_{n/2} & &\cdots & a_1\\
    1& 0 &  \cdots & n/2-1 & n/2-1 & n/2 & \cdots & n-2 \\
    \infty & 1 & \cdots & & &&\cdots   &1
   \end{array} \right\}
\]
for $n$ even (hence having symplectic monodromy) and
\[  \left\{\begin{array}{c| cccccccc}
    0&-a_1 & \cdots &  -a_{(n-1)/2}&1/2 & a_{(n-1)/2} & \cdots & a_1 \\
    1& 0 & \cdots & (n-1)/2 & n/2 & (n+1)/2 & \cdots & n-1 \\
    \infty & 1 && &\cdots& && 1
   \end{array} \right\}
\]
  for $n$ odd (hence having orthogonal monodromy).

  Let further $\pi_k$ be the pullback $x \mapsto x^k$  and   $\pi^k$ be the push forward $x \mapsto x^{1/k}$
  and
  \[  \phi:= \pi^k \circ  FT^{-1} \circ  \Gamma \circ  FT \circ \pi_k, \]
  where $FT$ denotes the formal Fourier-Laplace transform, $FT^{-1}$ its inverse and $\Gamma \in \{ \Sym^l, \Lambda^l\}$.
  Finally we apply the Hadamard product
   \[ \phi (L)  \star L_0, \]
  where
  $L_0$ is a suitable hypergeometric operator with self dual monodromy group defined over $\ZZ$.

  \begin{rem}
   The Fourier transform of $\pi_k ( L) $ has an  irregular singularity of pure  slope $1$ at infinity.
   Hence the push-forward
   $\pi^k$ of $(FT \circ \pi_k)(L)$ has an  irregular singularity of pure  slope $1/k$ at infinity.
   Taking exterior or symmetric powers does not alter the slope.
   Thus after applying the Katz-Arinkin algorithm \cite{Ar10}  $k$ times we obtain a  differential operator of slope $1$,
   i.e. again a fuchsian operator.
   Down to earth we just apply the sequence $ (x \mapsto 1/x) \circ FT$ $k$-times for almost all of our examples.
   In each step the local monodromy at the regular singularity $0$ is obtained by reducing the size of each unipotent Jordan block
   of the local monodromy at $0$ by $k$ and leaving the size all others Jordan blocks unchanged.
   This results in the same operator as above for $\phi (L)  \star L_0.$
  \end{rem}

We start with the following list of hypergeometric operators $L_k$ and apply the above construction.

  \[ \small  \begin{array}{cccc}
       L_k &       &    \pi^k \circ FT \circ \pi_k (L_k)\\
       \hline
      L_2 & [ X + 1/2, -(X+1)]& [2^2  X^2, -1]\\
      L_{3a}&       [ \prod_{i=1}^2(X+i/3) ,-(X+1)^2] & [3^3  X^3, -1]\\
       L_{3b} & [  (X+1/2) \prod_{i=1}^2 (X+i/3) ,-(X+1)^3] &[2\cdot 3^3  X^4, -1-2  X]\\
       L_{3c} &  [(X+1/2)^2 \prod_{i=1}^2 (X+i/3),-(X+1)^4]&[2^2 \cdot 3^3  X^5, -(2  X+1)^2] \\
       L_{3d}& [\prod_{i=1}^2(X+i/3)^2 ,-(X+1)^4]& [3^5  X^5, -(3  X+2)  (3  X+1)]\\
       L_{3e}& [ (X+1/6)  (X+5/6) \prod_{i=1}^2(X+i/3)    ,-(X+1)^4] & [2^2 3^5  X^5, -(6  X+5)  (6  X+1)] \\
       L_{3f} & [   (X+1/4)  (X+3/4)\prod_{i=1}^2(X+i/3)   ,-(X+1)^4]& [2^4 3^3  X^5, -(4  X+3)  (4  X+1)] \\
       L_{4a} & [(X+1/4)  (X+3/4),-(X+1)^2] &[2^7  X^3  (2  X-1), -1]\\
       L_{4b} & [(X+1/2)^2  (X+1/4)  (X+3/4),-(X+1)^4] & [2^8  X^5, -1-2  X]\\
        L_{5a}&  [ \prod_{i=1}^4(X+i/5) ,-(X+1)^4] &  [5^5  X^5, 1]\\
         L_{5b}&  [ \prod_{i \in \{-2,-1,1,2\}} (X+i/5) ,-(X+1/2)^4] & [5^5  X^4  (2  X-1), -2]\\
         L_{5c}&    [(X+1/2)\prod_{i=1}^4(X+i/5) ,-(X+1)^5] & [2 \cdot  5^5  X^6, -1-2  X] \\
         L_{6a}& [\prod_{i=1}^5(X+i/6),-(X+1)^5]& [6^6  X^6, -1]\\
         L_{6b}& [ (X+1/2) \prod_{i=1}^5(X+i/6),-(X+1)^6] & [2^73^6  X^7, -1-2  X]
\\
         L_7 &[ \prod_{i=1}^6(X+i/7),-(X+1)^6] & [7^7  X^7, -1]
\\

     \end{array}
\]

\[\begin{array}{cccccc}
   L_k  & \Sym^l/\Lambda^l &  L_0 &\mbox{remarks} & \#\\
   L_2 & \Sym^5 &  L_{2}(1/x)  & &3.1 \\
       &     \Sym^5  & - &  \Lambda^2 \Sp_4 \cong \SO_5  & 6.2& \\
       &    \Sym^6  &    L_{2}(1/x) &   \Lambda^2 \Sp_4 \cong \SO_5 &  6.1& \\
      L_{3a}  & \Sym^3  &    L_{3a}(1/x)&   \mbox{ second MUM point } & 5.1, 5.71 &  \\
        L_{3b}  & \Sym^2  &  L_{3a}(1/x)&  \mbox{ second MUM point } &5.4, 5.11  &  \\
         L_{3c}   &  \Lambda^2  & L_{3a}(1/x)  & \mbox{ second MUM point }  &5.47, 5.48 & \\
       L_{3d} &     \Lambda^2 &  L_{3a}(1/x)& \mbox{ second MUM point } &5.58, 5.59 & \\
         L_{3e}&      \Lambda^2 &  L_{3a}(1/x)& \mbox{ second MUM point }  & 5.60, 5.61 & \\
           L_{3f}  &  \Lambda^2   &   L_{3a}(1/x)& \mbox{ second MUM point } & 5.62, 5.63 &  \\
       L_{4a} & \Sym^2   & L_{4a}(1/x)& \mbox{ second MUM point }  & 4.33, 4.56  &  \\
        L_{4b}   &  \Lambda^2  &  L_{4a}(1/x)& & 2.60 &  \\
    L_{5a} & \Sym^2   &  L_{5a}(1/x)& \mbox{ second MUM point }   & 5.5, 5.16 &  \\
       L_{5b} &  \Lambda^2 &  L_{5b}(1/x) & \mbox{ second MUM point }   & 4.50, 4.51 &  \\
          L_{5c}   &  \Lambda^2 &L_{5a}(1/x)&  \mbox{ second MUM point }  & 5.15,  5.31 & \\
     L_{6a} & \Lambda^3  & L_{6}(1/x)  & &2.62     & \\
     L_{6b} &   \Lambda^2   &   L_{6}(1/x) &  \mbox{$FT \circ \pi_6$ is $G_2$ operator} & 4.28 \\
 L_7 &\Lambda^2   &   L_{7}(1/x)& \mbox{ second MUM point }  & 5.7, 5.46
  \end{array}
\]

\begin{rem}
\begin{enumerate}
 \item
The differential Galois group of the operator
 $\vartheta^{2n}-x$ is $Sp_{2n}$, of
  $\vartheta^{2n+1}+x$ is $SL_{2n+1}$ and of
 $\vartheta^{2n+1}-x(2\vartheta+1)$ is $\SO_{2n+1}$ for $n\neq 3$ and
 $G_2$ for $n=3,$ see \cite{Katz90}, \cite[Section 6]{FrGr}.

 \item

Note that that some of the above operators appear for
Calabi-Yau complete intersections 3-folds $X$ in Grassmannians $G(k, n)$ \cite[Section 5 and 7]{BFKvS98}.
\[\begin{array}{ccccc}
G(2,5) & =&\Lambda^2([X^5,-1)]) \\
G(2,6) & =&\Lambda^2([X^6,1]) \\
G(2,7) &  &\Lambda^2([X^7,1])  \mbox{ is a factor of  } G(2,7)  \\
G(3,6) & = &\Lambda^3([X^6,1]) \\
\end{array}
\]

 \end{enumerate}
\end{rem}

We demonstrate the construction for the following examples.

 \begin{ex}
 \begin{enumerate}
\item   Let  $L=L_{5c}$ be the $5$-th  order hypergeometric operator with Riemann scheme
\[  \left\{\begin{array}{c| cccccccc}
    0& 1/5 & 2/5 & 1/2 & 3/5 & 4/5 \\
    1& 0 & 1 & 3/2  & 2 & 3 \\
    \infty & 1 & 1 & 1 &1 & 1
   \end{array} \right\}
\]
 and $L_0$ be the  fourth  order hypergeometric operator with Riemann scheme
\[  \left\{\begin{array}{c| cccccccc}
    \infty & 1/5 & 2/5 & 3/5 & 4/5 \\
    1& 0 & 1 & 1  & 2  \\
    0 & 0 & 0 & 0 & 0
   \end{array} \right\}
\]

  Then
  \[    ((\pi^5 \circ  FT^{-1} \circ  \Lambda^2 \circ FT \circ \pi_5)  (L)) \star L_0 \]
  gives
    the Riemann-scheme
  \[  \left\{\begin{array}{c| cccccccc}
    \infty & 1 & 1 & 1 & 1 \\
     0 & 0 & 0 & 0 & 0 \\
    1& 0 & 1 & 1  & 2  \\
    (1+\zeta_5)^5=-\frac{11+5 \sqrt{5}}{2} & 0 & 1 & 1 & 2 \\
     (1+\zeta_5^2)^5=-\frac{11-5 \sqrt{5}}{2} & 0 & 1 & 1 & 2 \\
     3/14 & 0 & 1 & 3 & 4
   \end{array} \right\}
\]
  After a Moebius transformation we get
  the operators   \# 5.31 (AESZ: 212) and  \# 5.15 (AESZ: 117)

 \item  Let  $L=L_{5b}$ be the fourth  order hypergeometric operator with Riemann scheme
\[  \left\{\begin{array}{c| cccccccc}
    0& -2/5 & -1/5 & 1/5 & 2/5 \\
    1& 0 & 1 & 1  & 2  \\
    \infty & 1/2 & 1/2 & 1/2 &1/2
   \end{array} \right\}.
\]

  Then
   \[    ((\pi^5 \circ  FT^{-1} \circ \Lambda^2 \circ FT \circ \pi_5)  (L)) \star L \]
  gives
  the Riemann-scheme
  \[  \left\{\begin{array}{c| cccccccc}
     0 & 0 & 0 & 0 & 0 \\
    (1+\zeta_5)^5=-\frac{11+5 \sqrt{5}}{2} & 0 & 1 & 1 & 2 \\
     (1+\zeta_5^2)^5=-\frac{11-5 \sqrt{5}}{2} & 0 & 1 & 1 & 2 \\
     -1/2 & 0 & 1 & 3 & 4 \\
     \infty& 1/2 & 1/2 & 1/2 & 1/2
   \end{array} \right\}.
\]
 Up to scaling and a Moebius transformation  we obtain the operators   \# 4.50  and  \# 4.51.

\end{enumerate}
\end{ex}

\begin{proof}
 We outline the claim for the monodromy using Remark~\ref{FT}.
 \begin{enumerate}
  \item The local monodromy  of
        $L(x^5)$ is
        \[  \left\{\begin{array}{c| cccccccc}
                     0 & -1 \oplus J_1^4 \\
                     \zeta_5^i & -J_1 \oplus J_1^4 \\
                     \infty & J_5
                   \end{array}\right\}, i=1,\ldots,5,\; \zeta_5^5=1. \]
       %Thus the second exterior power of the self convolution is a symplectic operator of order
       %$12$.
       $FT(L(x^5))$ has order $6$ with local monodromy $J_6$ at zero.
       Hence the second exterior power of it has local monodromy $\Lambda^2 J_6=J_1\oplus J_5 \oplus J_9$ at $0$.
       Applying $FT^{-1}$ we get an order $12$ operator with   local monodromy $ J_4 \oplus J_8$ at $\infty$.
       The further
       singularities are  $0+\zeta_5^i, \zeta_5^i+\zeta_5^j,\; i,j=1,\ldots,5,i<j, $ with local monodromy $J_2\oplus J_1^{10}$
       each.
       Hence the push-forward $x\mapsto x^{1/5}$ has the singularities $1,(1+\zeta)^5,(1+\zeta^2)^5$ with local monodromy $J_2\oplus J_1^{10}$
       each, $0$ with local monodromy $\oplus_{i=1}^4 \zeta_5^{i} J_1^3$ and $ J_4 \oplus J_8$ at $\infty$.
       Therefore the Hadamard product with $L_0=L(1/x)$ gives the claim by Section~\ref{MC}.
     \item   The local monodromy  of
        $L(x^5)$ is
        \[  \left\{\begin{array}{c| cccccccc}
                      \zeta_5^i &  J_2 \oplus J_1^2 \\
                     \infty & J_4
                   \end{array}\right\}, i=1,\ldots,5,\; \zeta_5^5=1. \]
       $FT(L(x^5))$ has order $5$ with local monodromy $-J_4\oplus J_1$ at zero.
       Hence the second exterior power of it has local monodromy $\Lambda^2 J_5=-J_4\oplus J_1 \oplus  J_5 $ at $0$.
       Applying $FT^{-1}$ we get an order $8$ operator with   local monodromy $ -J_4 \oplus J_4$ at $\infty$.
       The further
       singularities are  $\zeta_5^i+\zeta_5^j,\; i,j=1,\ldots,5,i<j, $ with local monodromy $J_2\oplus J_1^{6}$
       each.
        Hence after the push-forward $x\mapsto x^{1/5}$ we have the singularities $(1+\zeta_5)^5,(1+\zeta_5^2)^5$ with local monodromy $J_2\oplus J_1^{6}$
       each, $0$ with local monodromy $\oplus_{i=1}^4 \zeta_5^{i} J_1^2$ and $ J_4 \oplus -J_4$ at $\infty$.
       Thus the Hadamard product with $L(1/x)$ gives the Calabi-Yau operator up to a Moebius transform.
 \end{enumerate}

\end{proof}

\begin{rem}

A similar  construction  as above with
\[  \pi^{2k}  (  (FT\circ \pi_k)  (L) \otimes  (FT\circ \pi_k)  L(-x))) \]
yields the operators 4.39, 4.40 (second MUM point), where
\[ L=[(X+1/2)(X+1/4)(X+3/4),-(X+1)^3],\quad \pi_k=x^4. \]
\end{rem}

\section{Miscellaneous}

\begin{rem}
 \begin{enumerate}
  \item
Let $L$ be a fourth order symplectic operator with Riemann scheme
\[   \left\{\begin{array}{c| cccccccc}
    \infty &  a & 1 & 1 & 1- a \\
    1& 0 & 1 & 1  & 2  \\
     t& 0 & 1 & 1  & 2  \\
    0 & 0 & 0 & 0 & 0
   \end{array} \right\},  a \not \in \ZZ, a \not \in 1/2+\ZZ.
\]
Then
$\MC_{\alpha}, \alpha=\exp(2 \pi i a),$ yields
a fourth order  operator with Riemann scheme
\[   \left\{\begin{array}{c| cccccccc}
    \infty & 1- a & 1- a & 1- a & 1-2 a \\
    1& 0 & 1 & 2  & 1+ a  \\
     t& 0 & 1 & 2  & 1+ a  \\
    0 & 0 &  a &  a &  a
   \end{array} \right\}.
\]
A Moebius transform gives
\[   \left\{\begin{array}{c| cccccccc}
      \infty & 1-2 a & 2-2 a & 3-2 a & 4-2 a \\
    \pm t & 0 &  a &  a &  a \\
    \pm 1 & 0 & 1 & 2  & 1+ a  \\
   \end{array} \right\}.
\]
A pushforward $ x \mapsto x^{1/2}$ results in
\[   \left\{\begin{array}{c| cccccccc}
      \infty & 1/2- a & 1- a & 3/2- a & 2- a \\
     t^2 & 0 &  a &  a &  a \\
     1 & 0 & 1 & 2  & 1+ a  \\
     0 & 0 & 1/2 & 1 & 3/2
   \end{array} \right\}.
\]
Thus both
$\MC_{-\alpha}$ and $\MC_{-\alpha} \circ  \mu_{x^{1/2}}$
yields a fourth order symplectic (up to scaling)  operator,
where $\mu_{x^{1/2}}$ denotes the multiplication by $x^{1/2}$.

Thus the  Riemann scheme is
\[   \left\{\begin{array}{c| cccccccc}
      \infty & 1/2 & 1+ a & 3/2 & 2+ a \\
     t^2 & 0 & 0 & 0 & 0 \\
     1 & 0 & 1 & 1  & 2  \\
     0 & 0 & 1/2- a & 1 & 3/2- a
   \end{array} \right\},
\]
in the former case and in the latter case
\[   \left\{\begin{array}{c| cccccccc}
      0 & -1/2- a & 0 & -3/2- a & 1 \\
     t^2 & 0 & 0 & 0 & 0 \\
     1 & 0 & 1 & 1  & 2  \\
     \infty & 1+ a & 1/2+1 & 2+ a & 1+3/2
   \end{array} \right\}, \mbox{resp..}
\]

This construction yields the following Calabi-Yau-operators after a suitable Moebius transform

\[ \begin{array}{cccc}
    L &  L_1 & L_2 \\
     2.62 (a=1/4) & 3.28 &3.29 \\   % pb_x6.m
   \end{array}
\]

  \item There is a  similar construction  starting with an orthogonal fifth order operator with Riemann scheme
         (arising as second exterior power of a symplectic fourth order operator)
       \[   \left\{\begin{array}{c| cccccccc}
    \infty &  a & 1 & 1 & 1 & 1- a \\
    1& 0 & 1 & 3/2 &2  & 3  \\
     t& 0 & 1 & 3/2 &2  & 3  \\
    0 & 0 & 0 & 0 & 0 & 0
   \end{array} \right\},  a \not \in \ZZ,  a \not \in 1/2+\ZZ
\]
 We trace the change of the Riemann scheme:
    \[  \MC_{\alpha}, \alpha=\exp(2 \pi i a) :  \left\{\begin{array}{c| cccccccc}
    \infty & 1-2 a & 1- a & 1- a & 1- a & 1- a \\
    1& 0 & 1 & 3/2+ a &2  & 3  \\
     t& 0 & 1 & 3/2+ a &2  & 3  \\
    0 &  a &  a &  a &  a & 0
   \end{array} \right\}
\]
After scaling and a Moebius-transform we get
 \[  %\mu_{x^{1/2}}:
  \left\{\begin{array}{c| cccccccc}
   0& 0& 1/2 & 1 & 3/2 &2   \\
  1 &  a &  a &  a &  a & 0\\
   s&  0 & 1 & 3/2+ a &2  & 3  \\
   \infty &1/2- a& 1- a & 3/2- a & 2- a &3/2-  a    \\
  \end{array} \right\}.  \]
Then
  \[
 \MC_{-\alpha}:  \left\{\begin{array}{c| cccccccc}
    \infty & 3/2 & 3/2+ a & 5/2 & 5/2+ a \\
    s& 0 & 1 & 1 &  2  \\
     0 & - a & 0 & 1- a &1    \\
    1 & -1/2 & -1/2 & -1/2 & -1/2
   \end{array} \right\}.
\]
After a Moebius-transform
\[
   \left\{\begin{array}{c| cccccccc}
   0 & 0 & 0 & 0 & 0 \\
     t_1& - a/2 &  a/2 & 1- a/2 &1+ a/2    \\
    t_2 & - a/2 &  a/2 & 1- a/2 &1+ a/2    \\
     \infty & 1 & 2 & 2 &  3  \\
   \end{array} \right\}.
\]
If $(x-t_1)(x-t_2) \in \ZZ[x]$ we get a possible candidate for a Calabi-Yau operator $L_A$.
 \end{enumerate}

From this construction we obtain

\[ \begin{array}{ccccc}
   & L &     L_A  \\
   &4.25 (a=1/3) & 4.75         \\    %  wurzeldb432.m
   \end{array}
\]

However we don't know whether operator 4.25 is of geometric origin.
\end{rem}

\section{Hadamard Products}

\subsection{Beauville operators}
\begin{rem} % Hada_Herf.m
 Let $L_1, L_2$ be Beauville operators, i.e. second order operators with unipotent monodromy at each of its four singularities arising from  families of elliptic curves with four singular fibres.
 Their Hadamard product $L_1 \star L_2$ is a Calabi-Yau operator  (for $L_1$ and $L_2$ not being isomorphic one gets the  operators \#8.1,\ldots,\#8.15  see \cite{AZ06}).
 It turns out that they are
 invariant under $x \mapsto 1/x$ after a suitable Moebius transformation.
Hence we can apply the push forward $x \mapsto x^{1/2}$   after a further suitable   Moebius transformation.
Thus the Riemann scheme of the resulting (reduced) operators looks like
  \[\left\{\begin{array}{c| ccccc}
    0 & 0 & 0 & 0 & 0 \\
    t_1 & 0 & 1 & 1 & 2 \\
    t_2 & 0 & 1 & 1 & 2 \\
     t_3 & 0 & 1/2 & 3/2 & 2 \\
      t_4 & 0 & 1/2 & 3/2 & 2 \\
    \infty & 1/2 & 3/2 & 5/2 & 7/2 \\
   \end{array} \right\},\quad  (x-t_1)(x-t_2),(x-t_3)(x-t_4) \in \QQ[x]. \]
 This refines the claim in the database \cite{AESZ}, where these operators    are only defined over an quadratic extension
 of $\QQ$.\\

However the following construction was missed in \cite{AZ06}.
 The Beauville operator of type
 $I_1 I_1 I_5 I_5$ arising from families of elliptic curves, see \cite{Herf91},
 $L_{I_1 I_1 I_5 I_5}=[X^2, 11X^2+11X+3, -(X+1)^2]$ has the Riemann-scheme
 \[ \left\{\begin{array}{c|ccc}
     0 & 0 & 0 \\
     11/2-\frac{5 \sqrt{5}}{2} &  0 & 0 \\
      11/2+\frac{5 \sqrt{5}}{2} &  0 & 0 \\
       \infty & 1 & 1 \\
    \end{array}\right\}
 \]
 After a Moebius transformations we obtain  two Galois conjugate   operators $\tilde{L}_{I_1 I_1 I_5 I_5}$
   and $\tilde{L}^\sigma_{I_1 I_1 I_5 I_5}$
 with Riemann-scheme
 \[ \left\{ \begin{array}{c|ccc}
     0 & 0 & 0 \\
     1 &  0 & 0 \\
      1/2+ \frac{11 \sqrt{5}}{50} &  0 & 0 \\
       \infty & 1 & 1 \\
    \end{array} \right\}\mbox{ and }
    \left\{\begin{array}{c|ccc}
     0 & 0 & 0 \\
     1 &  0 & 0 \\
      1/2-\frac{11 \sqrt{5}}{50} &  0 & 0 \\
       \infty & 1 & 1 \\
    \end{array} \right\}.
 \]
 Thus their Hadamard product gives the Calabi-Yau operator  $L$ with Riemann scheme
  \[  \left\{\begin{array}{c|ccccc}
     0 & 0 & 0 & 0 & 0\\
     1 &  0 & 1 & 1 & 2 \\
      1/125 &  0 & 1 & 1 & 2 \\
     1/2\pm \frac{11 \sqrt{5}} {50} &  0 & 1 & 1 & 2 \\
     \pm  \frac{\sqrt{5}}{25} &  0 & 1 & 3 & 4  \\
       \infty & 1 & 1 & 1 & 1 \\
    \end{array}\right\},
 \]
where
 \[L=[ X^4, -251 X^4-502 X^3-411 X^2-160 X-25, 5^3  (126 X^4+516 X^3+716 X^2+400 X+81), \]
 \[5^3  (251 X^4-1506 X^3-4609 X^2-3492 X-935), 5^6 (-254 X^4-508 X^3+394 X^2+648 X+199),\] \[5^6 (251 X^4+2510 X^3+1415 X^2-204 X-295), 5^9 (126 X^4-12 X^3-76 X^2-12 X+7),\] \[5^9 (-251 X^4-502 X^3-411 X^2-160 X-25),
5^{12} (X+1)^4]\]
has the instanton numbers $n_1=21$ and $n_3=595$.
After a Moebius transformation and the pushforward we obtain up to a Moebius transformation
the reduced Calabi-Yau operator $\tilde{L}$ with Riemann scheme
\[
   \left\{\begin{array}{c| ccccc}
    0 & 0 & 0 & 0 & 0 \\
    -1 & 0 & 1 & 1 & 2 \\
    -63/7688\pm 5 \sqrt{5}/7688  & 0 & 1/2 & 3/2 & 2 \\
    \infty & 1/2 & 3/2 & 3/2 & 5/2 \\
   \end{array} \right\}, \]
  where
\[\tilde{L}= [X^4, 757 X^4+506 X^3+471 X^2+218 X+38, 2^2  (59349 X^4+79384 X^3+81795 X^2+41110 X+7841),\]
\[ 2^2 (9872040 X^4+19822960 X^3+22492365 X^2+12147007 X+2448500), \]
\[ 2^4 (229864260 X^4+616242320 X^3+765625685 X^2+438007942 X+91428551),\]
\[2^4 31^2 (2 X+1) (5930448 X^3+16960696 X^2+18484142 X+6931643),\]
\[ 2^6 31^4 (2 X+3) (2 X+1) (16132 X^2+33272 X+17953), 2^8 31^6 (2 X+1) (2 X+3)^2 (2X+5)
]. \]
\end{rem}

\subsection{Hadamard products of dual second order operators}% Hadamard_Herf.m

Let
$L$ be a second order operator with Riemann scheme

\[  \left\{\begin{array}{c| ccccc}
    0 & 0 & 0  \\
    1 & 0 & a & \\
     t & 0 & -a & \\
    \infty & 1 & 1
 \end{array} \right\}.\]

 Then for $t\neq -1$ the Hadamard product of $L$ with $L(1/x)=L^\vee(xt),$ %s. Beauville Selbstfaltung Allg
 where $L^\vee$ denotes the dual operator, is a fourth order symplectic operator with Riemann scheme
 \[
   \left\{\begin{array}{c| ccccc}
    0 & 0 & 0 & 0 & 0 \\
    1 & 0 & 1-2a & 1+2a & 2 \\
    -1 & 0 & 1 & 3 & 4 \\
    t & 0 & 1 & 1 & 2 \\
   1/t & 0 & 1 & 1 & 2 \\
    \infty & 1 & 1 & 1 & 1 \\
  \end{array} \right\}.\]
A Moebius transformation gives after scaling
\[
   \left\{\begin{array}{c| ccccc}
     0 & 0 & 1 & 3 & 4 \\
    \pm 1 & 0 & 0 & 0 & 0 \\
    \pm (t+1)/(t-1) & 0 & 1 & 1 & 2 \\
    \infty & 1 & 2-2a & 2+2a & 3 \\
  \end{array} \right\}.\]
The push-forward $x \mapsto x^{1/2}$ and a  Moebius transformation finally yields
\[
   \left\{\begin{array}{c| ccccc}
     0 & 0 & 0 & 0 & 0 \\
     1 & 0 & 1/2 & 3/2 & 2 \\
    ((t+1)/(t-1))^2 & 0 & 1 & 1 & 2 \\
    \infty & 1/2 & 1-a & 1+a & 3/2 \\
  \end{array} \right\}.\]

 Starting with with one of the operators $L$ arising from families of elliptic curves \cite{Herf91} we get
  \[ \begin{array}{cc}
      \mbox{\cite{Herf91}} & \#\\
      I_1\; I_7\; II\;II &  3.20     \\
       I_2\; I_6\; II\;II &   2.58    \\
        I_1\; I_6\; II\;II &    3.15   \\
        I_1\; I_5\; III\;III &    3.19   \\
           I_2\; I_4\; III\;III &  2.17     \\
              I_3\; I_3\; III\;III &    3.7   \\
              I_2\; I_2\; IV\;IV &    2.59   \\
     \end{array}
\]
  These operators also appear in \cite[Section 4.3 family $Q_3$]{BR17} using a different construction.
  However we also start with the same second order two operators $L$ as in  \cite[Section 4.2]{BR17}
   ${\rm sign}(L)=(0,\lambda,\lambda,0),\quad \lambda=a$.
 %Bogie/index-2-operatoren

  For $t= -1$ the Hadamard product of $L$ with $L(1/x)$ is a fourth order operator with Riemann scheme
 \[
   \left\{\begin{array}{c| ccccc}
    0 & 0 & 0 & 0 & 0 \\
    1 & 0 & 1-2a & 1+2a & 2 \\
    -1 & 0 & 1 & 1 & 2 \\
    \infty & 1 & 1 & 1 & 1 \\
  \end{array} \right\}\]
A Moebius transformation gives
\[
   \left\{\begin{array}{c| ccccc}
    \pm 1 & 0 & 0 & 0 & 0 \\
    0 & 0 & 1 & 1 & 2 \\
    \infty & 1 & 2-2a & 2+2a & 3 \\
  \end{array} \right\}.\]
The push-forward $x \mapsto x^{1/2}$ and a  Moebius transformation gives
\[
   \left\{\begin{array}{c| ccccc}
     0 & 0 & 0 & 0 & 0 \\
     1 & 0 & 1/2 & 1/2 & 1 \\
    \infty & 1/2 & 1-a & 1+a & 3/2 \\
  \end{array} \right\}.\]
  Being symplectically rigid these operators already appear in \cite{BR13}.

  \bibliographystyle{amsplain}
%\bibliography{DR18}

\begin{thebibliography}{10}

\bibitem{Alm06}
Gert Almkvist, \emph{Calabi-{Y}au differential equations of degree 2 and 3 and
  {Y}ifan {Y}ang's pullback}, preprint: http://arxiv.org/abs/math.AG/0612215
  (2006).

\bibitem{AESZ}
Gert Almkvist, Christian van Enckevort, Duco van Straten, and Wadim Zudilin,
  \emph{Tables of {C}alabi--{Y}au equations}, preprint:
  http://arxiv.org/abs/math/0507430 (2005).

\bibitem{AZ06}
Gert Almkvist and Wadim Zudilin, \emph{Differential equations, mirror maps and
  zeta values}, Mirror symmetry. {V}, AMS/IP Stud. Adv. Math., vol.~38, Amer.
  Math. Soc., Providence, RI, 2006, pp.~481--515.

\bibitem{Ar10}
D.~Arinkin, \emph{Rigid irregular connections on {$\Bbb P^1$}}, Compos. Math.
  \textbf{146} (2010), no.~5, 1323--1338.

\bibitem{BFKvS98}
Victor~V. Batyrev, Ionu\c{t} Ciocan-Fontanine, Bumsig Kim, and Duco van
  Straten, \emph{Conifold transitions and mirror symmetry for {C}alabi-{Y}au
  complete intersections in {G}rassmannians}, Nuclear Phys. B \textbf{514}
  (1998), no.~3, 640--666.

\bibitem{BH}
F.~Beukers and G.~Heckman, \emph{Monodromy for the hypergeometric function
  {$_nF_{n-1}$}}, Invent. Math. \textbf{95} (1989), no.~2, 325--354.

\bibitem{BR13}
Michael Bogner and Stefan Reiter, \emph{On symplectically rigid local systems
  of rank four and {C}alabi-{Y}au operators}, J. Symbolic Comput. \textbf{48}
  (2013), 64--100.

\bibitem{BR17}
\bysame, \emph{Some fourth order {CY}-type operators with non-symplectically
  rigid monodromy}, Exp. Math. \textbf{26} (2017), no.~1, 98--113.

\bibitem{DJ17}
Michael Dettweiler and Mirjam J\"ollenbeck, \emph{Monodromy of the
  multiplicative and the additive convolution}, Algorithmic and experimental
  methods in algebra, geometry, and number theory, Springer, Cham, 2017,
  pp.~177--197.

\bibitem{DR99}
Michael Dettweiler and Stefan Reiter, \emph{An algorithm of {K}atz and its
  application to the inverse {G}alois problem}, vol.~30, 2000, Algorithmic
  methods in Galois theory, pp.~761--798.

\bibitem{DR20}
\bysame, \emph{On the {H}odge theory of the additive middle convolution}, Publ.
  Res. Inst. Math. Sci. \textbf{56} (2020), no.~3, 503--537.

\bibitem{DM19}
Charles~F. Doran and Andreas Malmendier, \emph{Calabi-{Y}au manifolds realizing
  symplectically rigid monodromy tuples}, Adv. Theor. Math. Phys. \textbf{23}
  (2019), no.~5, 1271--1359.

\bibitem{FrGr}
Edward Frenkel and Benedict Gross, \emph{A rigid irregular connection on the
  projective line}, Ann. of Math. (2) \textbf{170} (2009), no.~3, 1469--1512.

\bibitem{Herf91}
Stephan Herfurtner, \emph{Elliptic surfaces with four singular fibres}, Math.
  Ann. \textbf{291} (1991), no.~2, 319--342.

\bibitem{Katz96}
N.~Katz, \emph{{Rigid local systems}}, Ann. of Math. studies, vol. 139,
  Princeton University Press, Princeton, NJ, 1996.

\bibitem{Katz90}
Nicholas~M. Katz, \emph{Exponential sums and differential equations}, Annals of
  Mathematics Studies, vol. 124, Princeton University Press, Princeton, NJ,
  1990.

\bibitem{LW06}
Bong~H. Lian and J.~L. Wiczer, \emph{Genus zero modular functions}, preprint:
  arXiv:math.NT/0611291 (2006).

\bibitem{Osh12}
Toshio Oshima, \emph{Fractional calculus of {W}eyl algebra and {F}uchsian
  differential equations}, MSJ Memoirs, vol.~28, Mathematical Society of Japan,
  Tokyo, 2012.

\bibitem{SZ94}
Bruno Salvy and Paul Zimmermann, \emph{Gfun: A maple package for the
  manipulation of generating and holonomic functions in one variable}, ACM
  Transactions on Mathematical Software \textbf{20}, no.~2, 163--177.

\bibitem{vHoeijKu19}
Mark van Hoeij and Vijay~Jung Kunwar, \emph{Classifying (almost)-{B}elyi maps
  with five exceptional points}, Indag. Math. (N.S.) \textbf{30} (2019), no.~1,
  136--156.

\bibitem{vS18}
Duco van Straten, \emph{Calabi-{Y}au operators}, Uniformization,
  {R}iemann-{H}ilbert correspondence, {C}alabi-{Y}au manifolds \&
  {P}icard-{F}uchs equations, Adv. Lect. Math. (ALM), vol.~42, Int. Press,
  Somerville, MA, 2018, pp.~401--451.

\end{thebibliography}

\end{document}